\begin{document}

\newtheorem{theorem}{Theorem}
\newtheorem{lemma}[theorem]{Lemma}
\newtheorem{algorithm}{Algorithm}
\newtheorem{corollary}[theorem]{Corollary}
\newtheorem{proposition}[theorem]{Proposition}

\theoremstyle{definition}
\newtheorem{definition}[theorem]{Definition}
\newtheorem{question}[theorem]{Question}
\newtheorem{problem}[theorem]{Problem}
\newtheorem{example}[theorem]{Example}
\newtheorem{remark}[theorem]{Remark}
\newtheorem{conjecture}[theorem]{Conjecture}
\newtheorem{exercise}[theorem]{Exercise}

\newcommand{\comm}[1]{\marginpar{%
\vskip-\baselineskip 
\raggedright\footnotesize
\itshape\hrule\smallskip#1\par\smallskip\hrule}}


\def\cA{{\mathcal A}}
\def\cB{{\mathcal B}}
\def\cC{{\mathcal C}}
\def\cD{{\mathcal D}}
\def\cE{{\mathcal E}}
\def\cF{{\mathcal F}}
\def\cG{{\mathcal G}}
\def\cH{{\mathcal H}}
\def\cI{{\mathcal I}}
\def\cJ{{\mathcal J}}
\def\cK{{\mathcal K}}
\def\cL{{\mathcal L}}
\def\cM{{\mathcal M}}
\def\cN{{\mathcal N}}
\def\cO{{\mathcal O}}
\def\cP{{\mathcal P}}
\def\cQ{{\mathcal Q}}
\def\cR{{\mathcal R}}
\def\cS{{\mathcal S}}
\def\cT{{\mathcal T}}
\def\cU{{\mathcal U}}
\def\cV{{\mathcal V}}
\def\cW{{\mathcal W}}
\def\cX{{\mathcal X}}
\def\cY{{\mathcal Y}}
\def\cZ{{\mathcal Z}}

\def\C{\mathbb{C}}
\def\F{\mathbb{F}}
\def\K{\mathbb{K}}
\def\Z{\mathbb{Z}}
\def\R{\mathbb{R}}
\def\Q{\mathbb{Q}}
\def\N{\mathbb{N}}
\def\S{\mathbb{S}}
\def\T{\mathbb{T}}
\def\M{\textsf{M}}
\def\PP{\mathbb{P}}
\def\A{\mathbb{A}}
\def\p{\mathfrak{p}}
\def\n{\mathfrak{n}}
\def\X{\mathcal{X}}
\def\x{\textrm{\bf x}}
\def\w{\textrm{\bf w}}
\def\Nm{\mathrm{Nm}}
\def\Tr{\mathrm{Tr}}

\def\({\left(}
\def\){\right)}
\def\[{\left[}
\def\]{\right]}
\def\<{\langle}
\def\>{\rangle}

\def\gen#1{{\left\langle#1\right\rangle}}
\def\genp#1{{\left\langle#1\right\rangle}_p}
\def\genPs{{\left\langle P_1, \ldots, P_s\right\rangle}}
\def\genPsp{{\left\langle P_1, \ldots, P_s\right\rangle}_p}

\def\e{e}

\def\eq{\e_q}
\def\fh{{\mathfrak h}}

\def\lcm{{\mathrm{lcm}}\,}

\def\fl#1{\left\lfloor#1\right\rfloor}
\def\rf#1{\left\lceil#1\right\rceil}
\def\mand{\qquad\mbox{and}\qquad}

\def\jt{\tilde\jmath}
\def\ellmax{\ell_{\rm max}}
\def\llog{\log\log}

\def\ch{\hat{h}}
\def\GL{{\rm GL}}
\def\Orb{\mathrm{Orb}}
\def\vec#1{\mathbf{#1}}
\def\ov#1{{\overline{#1}}}
\def\Gal{{\rm Gal}}

\def\Crk{\mathrm{Crk}\,}

\numberwithin{equation}{section}
\numberwithin{theorem}{section}

\newcommand\al{\alpha}
\newcommand\be{\beta}
\newcommand\ga{\gamma}
\newcommand\lam{\lambda}

\title{Counting dynamical systems over finite fields}

\author{Alina Ostafe}
\address{School of Mathematics and Statistics, University of New South Wales, Sydney NSW 2052, Australia}
\email{alina.ostafe@unsw.edu.au}

\author{Min Sha}
\address{School of Mathematics and Statistics, University of New South Wales, Sydney NSW 2052, Australia}
\email{shamin2010@gmail.com}

\subjclass[2010]{37P05, 37P25, 05C20}
\keywords{Discrete dynamical system, dynamical equivalence, functional graph}

\begin{abstract}
We continue previous work to count non-equivalent dynamical systems over finite fields generated by polynomials or rational functions. 
\end{abstract}

\maketitle

\section{Introduction}

\subsection{Motivation}

A \textit{(discrete) dynamical system} is simply a map, denoted by $(\S,f)$,  
$$
f: \S \to \S
$$
from a set $\S$ to itself, and its dynamics is the study of the behaviour of the points in $\S$ under iteration of the map $f$. For any integer $n\ge 0$, we denote by $f^{(n)}$ the $n$-th iteration of $f$ with $f^{(0)}$ denoting the identity map. For any $\al \in \S$, its orbit is defined by 
$$
\cO_f(\al)=\{\al, f(\al), f^{(2)}(\al),\ldots \}.
$$ 
The fundamental problem in the study of dynamics is to classify
the points of $\S$ according to the behaviour of their orbits. We refer to \cite{Schmidt,Silverman} for more about background on dynamical systems.

Choosing the set $\S$ as algebraic objects, like groups, number fields, $p$-adic fields and finite fields, yields the so-called \textit{algebraic dynamics}. They have many applications in computer science, cryptology, theoretical physics, cognitive science, and so on; see \cite{AnKh} for more details.

In this paper, continuing previous work~\cite{KLMMSS}, we study a novel question, that is, counting dynamical systems up to equivalence in some settings. 

First, we introduce the definition of equivalence of dynamical systems. 
\begin{definition}
The dynamical systems $(\S, f )$ and $(\T, g)$ are said to be  \textit{dynamically equivalent} if there exists a
bijection $\sigma : \S \to \T$ such that $
\sigma^{-1} \circ g \circ\sigma = f$.
\end{definition}

We can study the dynamical system $(\S,f)$ and comprehend ``dynamical equivalence'' from the viewpoint of graph theory. We define the \textit{functional graph} of $(\S, f )$ as a directed graph, denoted by $\cG_{(\S, f )}$ (or $\cG_{f}$ if $\S$ is fixed), with vertices at each element of $\S$, where there is an edge from $x$ to $y$ if and only if $f (x) = y$. So, the functional graph encodes the structure of the system $(\S,f)$. 
It is easy to see that dynamical equivalence coincides with isomorphism of functional graphs, and we will use both concepts interchangeably.

Bach and Bridy \cite{BaBr} estimated the number
of non-equivalent dynamical systems (or non-isomorphic functional graphs) generated by affine linear transformations of linear spaces over a finite field. In \cite{KLMMSS}, the authors obtained some theoretic estimates on the number of non-equivalent dynamical systems generated by all polynomials over a finite field of a given degree. See Section \ref{sec:previous} for precise statements. 

In this paper, except for reviewing previous results, the main objective is  applying the techniques in \cite{BaBr,KLMMSS} to explore more about counting non-equivalent dynamical systems generated by polynomials or rational functions over finite fields. 

In particular, in Section 3.1 we give an upper bound for the number of nonequivalent dynamical systems defined by sparse polynomials with fixed number of terms. 
We want to indicate that sparse (univariate or multivariate) polynomials are useful for several applications: pseudorandom number generators \cite{Bog}, hitting set generators \cite{Lu}, discrete logarithm over $\F_{2^n}$ \cite{Cop}, and efficient arithmetic in finite fields \cite{Doche}.

In Sections 3.2 and 3.3 we give the exact number of nonequivalent dynamical systems defined by very special classes of polynomials. We conclude the paper with treating the case of rational functions and posing some questions of possible interest.

\subsection{Convention and notation}

Given a dynamical system $(\S,f)$, a point $\al \in \S$ is called \textit{periodic} if $f^{(n)}(\al)=\al$ for some integer $n\ge 1$; the 
smallest such integer $n$ is called the \textit{period} of $\al$. If $f(\al)=\al$, then $\al$ is a \textit{fixed point}. Given a periodic point $\al$ of period $n$, the subgraph of the graph $\cG_{(\S,f)}$ with vertices at each element of the set $\{\al,f^{(1)}(\al),\ldots,f^{(n-1)}(\al)\}$ is called a \textit{cycle} of \textit{length} $n$. A point $\al$ is called \textit{preperiodic} if some iteration $f^{(n)}(\al)$ ($n\ge 0$) is periodic. Note that if $\S$ is a finite set, then every point is preperiodic. 

Let $\F_q$ be a finite field of $q$ elements, where $q=p^k$, $p$ is a prime number and $k$ is a positive integer, and we let $\F_q^*=\F_q\setminus \{0\}$. As usual, denote by  $(\F_q)^n$ the $n$-dimensional linear space over $\F_q$ for integer $n\ge 1$, and let $\GL_n(\F_q)$ be the general linear group of degree $n$ over $\F_q$. 
Besides, we use $M_n(\F_q)$ to denote the set of $n$-by-$n$ matrices with entries in $\F_q$.  

We use the Landau symbols $O$ and $o$ and the Vinogradov symbol $\ll$. We recall that the assertions $U=O(V)$ and $U\ll V$ are both equivalent to the inequality $|U|\le cV$ with some absolute constant $c$, while $U=o(V)$ means that $U/V\to 0$.

\section{Previous results}
\label{sec:previous}

Recall that an affine linear transform from $(\F_q)^n$ to itself has the form 
$$
f: (\F_q)^n \to (\F_q)^n, \quad f(x)=Ax+b,
$$
 where $A\in M_n(\F_q)$ and $b\in (\F_q)^n$. 
Denote by $D_q(n)$ the number
of non-equivalent dynamical systems (or non-isomorphic functional graphs) of affine linear transformations from $(\F_q)^n$ to itself. 
Bach and Bridy \cite[Theorem 1]{BaBr} showed that 
$$
\sqrt{n} \ll \log D_q(n) \ll \frac{n}{\log \log n}.
$$
The proof is based on the observation that given an affine linear transform $f$, 
for any affine automorphism $\phi$  the composition map  $\phi^{-1}\circ f \circ \phi$ has the same functional graph as $f$, that is, they generate the same dynamical system.  In addition, it is also an improvement on the well-known fact that the number of conjugacy classes in $\GL_n(\F_q)$ is less than $q^n$; for instance see \cite[Lemma A.1]{Maslen}.

Let $N_d(q)$ be the number of non-equivalent dynamical systems over $\F_q$ generated by all polynomials $f(X) \in \F_q[X]$
of degree $d\ge 2$ as the form 
$$
f: \F_q \to \F_q, \quad x \mapsto f(x).
$$
(In this paper, all the dynamical systems over $\F_q$ generated by polynomials follow this rule.)
 By counting the polynomials of degree $d$, one can easily get $N_d(q) \le (q-1)q^d$. 
In \cite{KLMMSS}, based on a similar observation as the above, the authors obtained some upper bounds concerning $N_d(q)$.
\begin{theorem}
\label{thm:Nd bound U}
For any $d\ge 2$ and $q$, we have
\begin{equation*}
\begin{split}
N_d(q)
&\le \left\{\begin{array}{ll}
q^{d-1}+(s-1)q^{d-1-\varphi(d-1)},& \text{if $p\nmid d$,}\\
q^{d-1}+(s-1)q^{d-1-\varphi(d-1)}+(q-1)q^{d/p-1},&\text{if $p \mid d$},
\end{array}
\right.
\end{split}
\end{equation*}
where $s=\gcd(q-1,d-1)$, and $\varphi$ is Euler's totient function.
In particular,  we have $N_d(q)\le 3q^{d-1}$.
\end{theorem}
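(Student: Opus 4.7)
The plan is to extend the approach of \cite{BaBr, KLMMSS}: for any affine map $\phi(x) = ax + b$ with $a \in \F_q^*$ and $b \in \F_q$, the polynomial $\phi^{-1}\circ f\circ\phi$ has the same functional graph as $f$. Thus, with $G$ denoting the affine group of order $q(q-1)$ acting by conjugation on the set $P_d$ of polynomials $f=\sum_{i=0}^d c_i x^i$ of degree $d$, we have $N_d(q)\le |P_d/G|$, and the task reduces to counting (or bounding) the orbits of this action.

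I would first treat the case $p \nmid d$. Here, conjugation by a suitable translation $\phi(x) = x + b$ kills the coefficient of $x^{d-1}$, since it becomes $c_{d-1} + d c_d b$ and $d c_d \ne 0$ makes the equation solvable for $b$. So every $G$-orbit meets the subset $P_d^0 \subset P_d$ of polynomials with $c_{d-1}=0$, of cardinality $(q-1)q^{d-1}$, and it suffices to count orbits of the residual dilation action $c_i \mapsto a^{i-1} c_i$ of $\F_q^*$ on $P_d^0$. A dilation $a$ of order $r > 1$ fixes $f \in P_d^0$ iff $a^{i-1} = 1$ on the support of $f$; this forces $r \mid d-1$ (from $c_d\ne 0$, hence $r\mid s$) and restricts the allowed support to $\{i : i\equiv 1\pmod r\}$, so there are $(q-1)q^{(d-1)/r}$ such $f$. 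Burnside's lemma then yields
$$
|P_d^0/\F_q^*| \;=\; q^{d-1} \;+\; \sum_{\substack{r\mid s\\ r>1}} \varphi(r)\, q^{(d-1)/r}.
$$

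Next, I would show each error term is bounded by $q^{d-1-\varphi(d-1)}$. For $r\mid d-1$ with $r>1$, letting $\ell_0$ denote the smallest prime divisor of $d-1$ (and noting $\ell_0\le r$ since $r$ itself has a prime factor dividing $d-1$),
$$
\frac{\varphi(d-1)}{d-1} \;=\; \prod_{\ell\mid d-1}\!\left(1-\frac{1}{\ell}\right) \;\le\; 1-\frac{1}{\ell_0} \;\le\; 1-\frac{1}{r},
$$
which rearranges to $(d-1)/r \le d-1-\varphi(d-1)$. Combined with the identity $\sum_{r\mid s,\, r>1} \varphi(r) = s-1$, this proves the bound for $p\nmid d$.

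For $p \mid d$, translations leave $c_{d-1}$ invariant (since $dc_d b \equiv 0$), so the above normalization fails. I would split $P_d$ into a generic part on which a modified affine normalization still yields the Burnside bound $q^{d-1}+(s-1)q^{d-1-\varphi(d-1)}$, plus an exceptional family of polynomials essentially of the form $g(x^p)$ with $\deg g = d/p$: these comprise $(q-1)q^{d/p}$ polynomials, each lying in an affine orbit of size at least $q$, which together contribute the additional $(q-1)q^{d/p-1}$. Finally, $N_d(q)\le 3q^{d-1}$ is immediate from $s\le q$, $\varphi(d-1)\ge 1$, and $d/p\le d-1$. The main obstacle is this $p\mid d$ case: the translation action becomes degenerate and requires a careful classification of polynomials with non-trivial translation stabilizers in order to isolate the exceptional contribution without overcounting.
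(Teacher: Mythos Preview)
Your $p\nmid d$ argument is correct. It is a mild reorganisation of the approach the paper uses (the theorem itself is quoted from \cite{KLMMSS}, but the paper reproduces the method in the proofs of Theorems~\ref{thm:lin} and~\ref{thm:Nmn2}): instead of first passing to the slice $c_{d-1}=0$ and then applying Burnside for the residual $\F_q^*$-action, the paper keeps the full affine group of order $q(q-1)$ and bounds each fixed-point count $M_d(\lambda,\mu)$ by comparing coefficients in $f(\lambda X+\mu)=\lambda f(X)+\mu$. Your intermediate exact formula $|P_d^0/\F_q^*|=q^{d-1}+\sum_{r\mid s,\,r>1}\varphi(r)\,q^{(d-1)/r}$ and the inequality $(d-1)/r\le d-1-\varphi(d-1)$ are neat; the paper's route obtains the same exponent by observing that for $\lambda\ne1$ the coefficient $a_j$ is forced whenever $\gcd(j-1,d-1)=1$.

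The case $p\mid d$ is where your proposal has a real gap. The suggested split into a ``generic'' part plus an ``exceptional'' family of the shape $g(x^p)$ is not substantiated: you do not say what the generic part is, why it admits a modified normalisation recovering the first two terms, or why the exceptional set should be precisely the polynomials in $x^p$. (In fact the polynomials fixed by a nontrivial translation $X\mapsto X+\mu$ are those of the form $X+H(X^p-\mu^{p-1}X)$, not $g(x^p)$, and these families vary with $\mu$.) The paper's argument avoids any such decomposition: one stays with Burnside over the whole affine group, and the only additional contribution when $p\mid d$ comes from pairs $(\lambda,\mu)=(1,\mu)$ with $\mu\ne0$. Comparing the coefficients of $X^{j-1}$ in $f(X+\mu)=f(X)+\mu$ gives relations $j\,a_j\,\mu=F_j(a_{j+1},\ldots,a_d,\mu)$, so $a_j$ is determined by the higher coefficients whenever $p\nmid j$; hence $M_d(1,\mu)\le(q-1)q^{d/p}$ for each of the $q-1$ values of $\mu$, and dividing by $q(q-1)$ yields exactly the extra term $(q-1)q^{d/p-1}$. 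This handles both parity cases uniformly with no need to carve $P_d$ into pieces.
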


Moreover, they also gave a lower bound for $N_d(q)$.

\begin{theorem}
\label{thm:Nd bound L}
Suppose that $\gcd(d-1,q)=1$. Then, for any $d\ge2$ and $e=\gcd(d,q-1) \ge 2$, we have
$$
N_d(q) \ge q^{\rho_{d,e} + o(1)}
$$
as $q \to \infty$, where
$$
\rho_{d,e}= \frac{1}{2(e- 1 + \log d/\log e)}.
$$
\end{theorem}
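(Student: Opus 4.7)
The plan is to exhibit a family $\cF$ of degree-$d$ polynomials over $\F_q$ and a functional-graph invariant $I$ taking at least $q^{\rho_{d,e}+o(1)}$ distinct values on $\cF$; since equivalent dynamical systems share the same $I$, this yields the claimed lower bound on $N_d(q)$.

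I would exploit the hypothesis $e=\gcd(d,q-1)\ge 2$. The subgroup $(\F_q^*)^e$ has index $e$ in $\F_q^*$, and since $e\mid d$, any polynomial of the form $f(x)=g(x)^e+c$ with $\deg g=d/e$ has image contained in $c+((\F_q^*)^e\cup\{0\})$, and each nonempty fiber $f^{-1}(y)$ has cardinality $e\cdot|g^{-1}(\eta)|$ for $\eta$ any $e$-th root of $y-c$. Iterating, the preimage tree at a periodic point of $f$ branches by roughly a factor of $e$ at each level and saturates $\F_q$ after about $k\approx\log d/\log e$ steps, producing a tree-shaped invariant $I(f)$ read off from $\cG_f$, which is consequently constant on dynamical equivalence classes.

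Next I would take $\cF$ to consist of polynomials $f=g^e+c$ with $g$ and $c$ varying over an explicit parameter space, using $\gcd(d-1,q)=1$ to guarantee regularity of the fixed-point set and so canonically anchor the root of the invariant tree. The exponent $\rho_{d,e}=1/(2(e-1+\log d/\log e))$ should then emerge by balancing: the $e-1$ non-trivial cosets of $(\F_q^*)^e$ together with the $\log d/\log e$ tree levels account for the $e-1+\log d/\log e$ term in $\rho_{d,e}^{-1}$, while the factor $2$ arises from a symmetric double count of ordered pairs $(f,f')\in\cF^2$ with $I(f)=I(f')$.

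The main obstacle is the rigidity statement underlying this count: one must show that $f\mapsto I(f)$ has small fibers on $\cF$, i.e.\ that knowledge of the labeled preimage tree essentially recovers the underlying polynomial. I expect this step to rely on resultant or character-sum estimates to rule out accidental coincidences of preimage structure, together with the constraint that any bijection $\sigma$ conjugating two polynomials of low degree $d$ in $\cF$ must preserve both the cycle structure and the coset structure inherited from $(\F_q^*)^e$.
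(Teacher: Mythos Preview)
This theorem is not proved in the present paper at all: it appears in Section~2, ``Previous results,'' where it is quoted verbatim from \cite{KLMMSS} without proof. There is therefore no argument here to which your proposal can be compared; the paper simply records the statement for context.

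Judged on its own terms, your proposal is a plan rather than a proof, and the gaps are substantial. First, the heuristic that the preimage tree ``saturates $\F_q$ after about $k\approx\log d/\log e$ steps'' is not coherent: $\log d/\log e$ is a fixed constant depending only on $d$ and $e$, whereas saturation of $\F_q$ would require on the order of $\log q/\log d$ levels. You have not explained why the constant $\log d/\log e$ enters as a number of levels rather than, say, the number of prime-power factors in a chain or the depth at which an $e$-ary branching first refines a $d$-ary one. Second, your fiber description is imprecise: $f^{-1}(y)$ has size $\sum_{\eta^e=y-c}|g^{-1}(\eta)|$, which is not generally $e\cdot|g^{-1}(\eta)|$ for a single $\eta$, and when $y-c$ is a non-$e$-th-power the fiber is empty --- this coset dichotomy is presumably where the $e-1$ in the exponent comes from, but you have not linked it to a count. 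Third, and most seriously, you identify the ``rigidity statement'' (that $I$ has small fibers on $\cF$) as the main obstacle and then leave it entirely open, gesturing at resultants or character sums without any indication of what estimate is needed or why it would hold. Since this step is precisely where the exponent $\rho_{d,e}$ must be produced, the proposal as it stands does not yet constitute a proof sketch one could complete mechanically.
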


We also want to indicate that in \cite{KLMMSS} several algorithms are provided to list all the functional graphs up to isomorphism generated by polynomials of a given degree over finite fields.

\section{Main results}

\subsection{The case of sparse polynomials} 
Our first results give  upper bounds for the number of non-equivalent dynamical systems defined by arbitrary polynomials with fixed number of non-zero coefficients. 

Recall that $q=p^k$ for a positive integer $k$. Let $e_1,\ldots,e_s$ be distinct non-negative integers. We denote by $S_{e_1,\ldots,e_s}(q)$ the number of non-equivalent dynamical systems over $\F_q$ generated by all polynomials $f(X) \in \F_q[X]$ with $s$ non-zero terms of the form
\begin{equation}
\label{eq:sparse}
f=\sum_{i=1}^s a_iX^{e_i}.
\end{equation}

\begin{theorem}
\label{thm:sparse1}
For any integer $s\ge 1$, we have
$$
S_{e_1,\ldots,e_s}(q)\le (q-1)^{s-1}\gcd(e_1-1,\ldots, e_s-1,q-1).
$$
\end{theorem}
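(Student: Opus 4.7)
My plan is to imitate the ``hidden conjugacy'' argument from \cite{BaBr, KLMMSS}: I will exhibit a group of bijections of $\F_q$ whose conjugation action on the set $\mathcal{P}$ of polynomials of the form~\eqref{eq:sparse} with all $a_i \in \F_q^*$ preserves both the prescribed monomial support and the dynamical equivalence class, and then count orbits by Burnside's lemma. The subtlety in the sparse setting is that a general affine conjugation $x \mapsto ux+v$ introduces new monomials and destroys the support; the natural substitute is the group of multiplicative scalings $\sigma_c : x \mapsto cx$ for $c \in \F_q^*$, which are bijections of $\F_q$ that stabilise every fixed set of monomials $\{X^{e_1},\ldots,X^{e_s}\}$.

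Concretely, for any $c \in \F_q^*$ and any $f = \sum_{i=1}^s a_i X^{e_i} \in \mathcal{P}$, a direct computation gives
$$
\sigma_c^{-1}\bigl(f(\sigma_c(X))\bigr) = c^{-1}\sum_{i=1}^s a_i (cX)^{e_i} = \sum_{i=1}^s \bigl(a_i c^{e_i-1}\bigr) X^{e_i},
$$
which still lies in $\mathcal{P}$. By the definition of dynamical equivalence, this polynomial generates a dynamical system equivalent to that of $f$, so I obtain an action of the cyclic group $\F_q^*$ on $\mathcal{P}$ given by $c \cdot (a_1,\ldots,a_s) = (a_1 c^{e_1-1},\ldots,a_s c^{e_s-1})$, and $S_{e_1,\ldots,e_s}(q)$ is bounded above by the number of orbits of this action.

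To count the orbits I apply Burnside's lemma. A tuple $(a_1,\ldots,a_s) \in (\F_q^*)^s$ is fixed by $c$ exactly when $c^{e_i-1}=1$ for every $i$ (using $a_i \neq 0$), equivalently when the multiplicative order of $c$ divides $d := \gcd(e_1-1,\ldots,e_s-1,q-1)$. Since $\F_q^*$ is cyclic of order $q-1$, there are exactly $d$ such elements $c$; for each of them all $(q-1)^s$ tuples are fixed, while for any other $c$ no tuple is fixed. Hence the number of orbits equals
$$
\frac{1}{q-1}\sum_{c\in \F_q^*}\bigl|\mathrm{Fix}(c)\bigr| = \frac{d\cdot (q-1)^s}{q-1} = d\,(q-1)^{s-1},
$$
which is the desired bound.

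The main conceptual ingredient is identifying a group of conjugating bijections that preserves the support of the polynomial; once that is in place, the count is entirely routine, so I do not anticipate any serious technical obstacle. One should, however, note that this bound is unlikely to be sharp in general, since dynamical equivalence may also be realised by bijections of $\F_q$ that are not multiplicative scalings, and any such extra identifications are ignored by the argument above.
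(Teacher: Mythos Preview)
Your proof is correct and essentially identical to the paper's: both use the cyclic group of multiplicative scalings $x\mapsto \lambda x$ acting by conjugation on the set of polynomials with the prescribed support, then apply Burnside's lemma and observe that a scaling $\lambda$ fixes a polynomial with all $a_i\neq 0$ iff $\lambda^{e_i-1}=1$ for every $i$, giving exactly $\gcd(e_1-1,\ldots,e_s-1,q-1)$ such $\lambda$, each fixing all $(q-1)^s$ tuples.
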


\begin{proof}
For $\lambda \in \F_q^*$, we define the bijection from $\F_q$ to itself 
$$
\psi_{\lam}:\  X \mapsto \lambda X
$$
with inverse $\psi_{\lam}^{-1}: \  X \mapsto \lambda^{-1}X$. Particularly, these bijections form a group of order $(q-1)$ in the usual way, which acts on the set of polynomials $f$ of the form~\eqref{eq:sparse} as the map
$$
f(X)\to \psi_{\lam}^{-1} \circ f \circ \psi_{\lam}(X).
$$

 The number of the orbits of the above group action can be calculated by the Burnside counting formula. This implies that 
$$
S_{e_1,\ldots,e_s}(q)\le \frac{1}{q-1} \sum_{\lambda\in\F_q^*} M_{e_1,\ldots,e_s}(\lambda),
$$
where $M_{e_1,\ldots,e_s}(\lambda)$ is the number of polynomials of the form~\eqref{eq:sparse} that are fixed by the above action. This reduces the problem to counting the number of coefficient vectors $(a_1,\ldots,a_s)\in\(\F_q^*\)^s$ such that $f(\lambda X)=\lambda f(X)$, and thus the number of solutions to $a_i\(\lambda^{e_i-1}-1\)=0$,  $i=1,\ldots,s$. As $a_i\ne 0$, $i=1,\ldots,s$, we have $\lambda^{e_i-1}=1$ for all $i=1,\ldots,s$. Each equation $\lambda^{e_i-1}=1$ has $\gcd(e_i-1,q-1)$ solutions in $\F_q^*$, and thus the number of $\lambda \in \F_q^*$ satisfying all the $s$ equations is $\gcd(e_1-1,\ldots,e_s-1,q-1)$.

As for each such $\lambda$ we have $M_{e_1,\ldots,e_s}(\lambda)= (q-1)^s$, putting everything together we obtain the desired result.
\end{proof}

We also use another approach to give a different bound, which does not depend on the exponents $e_1,\ldots,e_s$ and is better than Theorem \ref{thm:sparse1} in some special cases.

Let $\sigma$ be the automorphism of $\F_q$ which fixes $\F_p$ defined by $\sigma(x)=x^p$. 
For a polynomial $f\in\F_q[X]$ of the form~\eqref{eq:sparse}, 
we define
$$
\sigma(f)=\sum_{i=1}^s \sigma(a_i)X^{e_i}.
$$
Moreover, for $i\ge 1$, we have
$$
\sigma^i(f)=\sum_{i=1}^s \sigma^i(a_i)X^{e_i}.
$$

\begin{theorem}
\label{thm:sparse}
For any integer $s\ge 1$, we have
$$
S_{e_1,\ldots,e_s}(q)\le \frac{(q-1)^s}{k}+\frac{2(q^{1/2}-1)^{s}}{k}+(q^{1/3}-1)^{s}.
$$
\end{theorem}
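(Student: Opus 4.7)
The plan is to apply a Burnside-type counting argument to the action of the Galois group $\Gal(\F_q/\F_p) = \langle \sigma \rangle$, which has order $k$, on the coefficient space of polynomials of the form~\eqref{eq:sparse}. The key observation, parallel to the proof of Theorem~\ref{thm:sparse1}, is that for any bijection $\tau : \F_q \to \F_q$ the system $(\F_q,\, \tau^{-1}\circ f\circ \tau)$ is dynamically equivalent to $(\F_q,f)$. Taking $\tau = \sigma^j$ and using $x^q = x$ on $\F_q$, a short computation shows that $\sigma^{-j}\circ f \circ \sigma^j$ acts on $\F_q$ as the polynomial $\sigma^{-j}(f) = \sum_i \sigma^{-j}(a_i)X^{e_i}$. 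Hence the cyclic group $\langle \sigma\rangle$ acts on the set
$$
\cP := \left\{ \sum_{i=1}^s a_i X^{e_i} : a_1, \ldots, a_s \in \F_q^* \right\}
$$
of cardinality $(q-1)^s$, and polynomials in the same orbit give dynamically equivalent systems.

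Consequently $S_{e_1,\ldots,e_s}(q)$ is bounded above by the number of $\langle \sigma\rangle$-orbits on $\cP$, which by the Cauchy--Frobenius (Burnside) lemma equals $\frac{1}{k}\sum_{j=0}^{k-1}|\mathrm{Fix}(\sigma^j)|$. A polynomial in $\cP$ is fixed by $\sigma^j$ exactly when every coefficient $a_i$ lies in the fixed field of $\sigma^j$, namely $\F_{p^{\gcd(j,k)}}$. Thus $|\mathrm{Fix}(\sigma^j)| = (p^{\gcd(j,k)}-1)^s$, and regrouping the sum by the divisor $d = \gcd(j,k)$ of $k$ yields
$$
S_{e_1,\ldots,e_s}(q) \le \frac{1}{k}\sum_{d \mid k} \varphi(k/d)\,(p^d - 1)^s.
$$

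To conclude, I would split this divisor sum into three pieces. The term $d = k$ contributes $(q-1)^s/k$. When $k$ is even, the next-largest divisor is $d = k/2$, contributing at most $(q^{1/2}-1)^s/k$; inflating the coefficient by a factor of $2$ absorbs both parities of $k$ into the middle term of the claim. Every remaining divisor satisfies $d \le k/3$, so $p^d \le q^{1/3}$, and using $\sum_{e \mid k, e \ge 3} \varphi(e) \le \sum_{e \mid k} \varphi(e) = k$ bounds the joint contribution of these divisors by $(q^{1/3}-1)^s$. Combining the three estimates yields the stated inequality. There is no real obstacle in the proof; the only mildly delicate point is the bookkeeping on divisors of $k$, after which the argument is a routine application of Burnside once the Galois action on $\cP$ has been identified as preserving dynamical equivalence.
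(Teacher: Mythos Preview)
Your proposal is correct and follows essentially the same approach as the paper: both exploit the action of the Galois group $\langle\sigma\rangle$ on the coefficient space $(\F_q^*)^s$, bound $S_{e_1,\ldots,e_s}(q)$ by the number of orbits, and split the resulting divisor sum into the pieces $d=k$, $d=k/2$, and $d\le k/3$. The only cosmetic differences are that the paper counts orbits by stratifying vectors according to their minimal field of definition (obtaining $\sum_{d\mid k}\cN(d)/d\le\sum_{d\mid k}(p^d-1)^s/d$) rather than via Burnside, and bounds the tail using monotonicity of $(p^d-1)^s/d$ rather than your identity $\sum_{e\mid k}\varphi(e)=k$; your Burnside expression $\tfrac{1}{k}\sum_{d\mid k}\varphi(k/d)(p^d-1)^s$ is in fact the exact orbit count and hence slightly sharper as an intermediate step.
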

\begin{proof}
It is easy to see that for any $0\le i\le k-1$, $f$ and $\sigma^{i}(f)$ define the same functional graph. Indeed, using the bijection from $\F_q$ to $\F_q$ defined by $x\to x^{p^{k-i}}$ and its inverse $x\to x^{p^i}$, the equivalence of the dynamical systems generated by $f$ and $\sigma^i(f)$ follows from
$$
X^{p^i}\circ f\circ X^{p^{k-i}}=\sigma^i(f)(X^q).
$$
We denote by
$$
\vec{a}=(a_1,\ldots,a_s)
$$
the vector of coefficients of $f$, and by $\sigma^i(\vec{a})=\(\sigma^i(a_1),\ldots,\sigma^i(a_s)\)$, $i=1,\ldots,k-1$.
Thus, all the vectors $\vec{a},\sigma(\vec{a}),\ldots,\sigma^{k-1}(\vec{a})$ define the same dynamical system. In other words, $S_{e_1,\ldots,e_s}(q)$ is upper bounded by the number of cycles of the map $\sigma$ on $\(\F_q^*\)^s$.

We denote by $d(\vec{a})$ the smallest degree field extension of $\F_p$ such that $\vec{a}\in\(\F_{p^{d(\vec{a})}}\)^s$ and by $\cN(d)$ the size of the set
$$
\{\vec{a}\in\(\F_q^*\)^s\mid d(\vec{a})=d\}.
$$
With this notation, note that for any such vector $\vec{a}$ and any integer $1\le i \le k-1$ we have $\sigma^i(\vec{a}) \in \(\F_{p^{d(\vec{a})}}\)^s$, then the number of cycles of the map $\sigma$ on $\(\F_q^*\)^s$ is at most $\sum_{d|k}\frac{\cN(d)}{d}$. 
Then, based on the discussion above, we have
\begin{equation*}
\begin{split}
S_{e_1,\ldots,e_s}(q)&\le \sum_{d|k}\frac{\cN(d)}{d}\le \sum_{d\mid k}\frac{\(p^{d}-1\)^s}{d}\\
&\le \frac{\(p^{k}-1\)^s}{k}+\frac{2\(p^{k/2}-1\)^s}{k}+\sum_{d\mid k, d\le k/3}\frac{\(p^{d}-1\)^s}{d}.
\end{split}
\end{equation*}
Now, as $p^s\ge 2$, we note that $\frac{\(p^{d}-1\)^s}{d}$ is an increasing function in $d\ge 1$, and thus we get
$$
\sum_{d\mid k, d\le k/3}\frac{\(p^{d}-1\)^s}{d}\le \sum_{1\le d\le k/3}\frac{\(p^{d}-1\)^s}{d}\le \(p^{k/3}-1\)^s.
$$
Putting everything together we get
$$
S_{e_1,\ldots,e_s}(q)\le \frac{\(p^{k}-1\)^s}{k}+\frac{2\(p^{k/2}-1\)^s}{k}+\(p^{k/3}-1\)^s,
$$
and thus we conclude the proof.
\end{proof}

We note that Theorem~\ref{thm:sparse} is better than Theorem~\ref{thm:sparse1} only when $e_1-1,\ldots, e_s-1,q-1$ have a large common factor. It would be certainly interesting to combine both types of bijections in Theorems~\ref{thm:sparse1} and~\ref{thm:sparse} to obtain a better estimate for $S_{e_1,\ldots,e_s}(q)$.

One can get a more explicit estimate in Theorem~\ref{thm:sparse} using the M\" obius inversion formula~\cite[Theorem 3.24]{LN}.  Indeed, as 
$$
\sum_{d|k}\cN(d)=\(p^k-1\)^s,
$$
applying the M\" obius inversion formula, we obtain
$$
\cN(k)=\sum_{d|k} \mu(d)\(p^{k/d}-1\)^s,
$$
where $\mu$ is the M\" obius function. Then, 
\begin{equation}
\label{eq:moeb}
S_{e_1,\ldots,e_s}(q)\le\sum_{d|k}\frac{\cN(d)}{d}=\sum_{e\mid k}\frac{\mu(e)}{e}\sum_{d\mid \frac{k}{e}}\frac{\(p^{d}-1\)^s}{d}.
\end{equation}

If for example $k$ is prime, then~\eqref{eq:moeb} gives a better estimate than Theorem~\ref{thm:sparse},
$$
S_{e_1,\ldots,e_s}(q)\le \frac{(q-1)^{s}}{k}+\frac{k-1}{k}(p-1)^{s}.
$$

We also note that Theorem~\ref{thm:sparse} holds also with any distinct integers $e_1,\ldots,e_s$, not necessarily non-negative.

\subsection{The case of linearised polynomials}
For integer $n\ge 1$, we denote by $L_{n}(q)$ the number of non-equivalent dynamical systems over $\F_q$ generated by all linearised polynomials of degree $p^n$ of the form
\begin{equation}
\label{eq:lin}
\cL (X)=\sum_{i=0}^{n}a_iX^{p^i}\in\F_q[X],\quad a_{n}\ne 0.
\end{equation}
We want to improve upon the trivial bound 
$$
L_n(q)< q^{n+1}.
$$

We follow exactly the same ideas as in the proof of~\cite[Theorem 1]{KLMMSS} to show the following nontrivial estimate. 

\begin{theorem}
\label{thm:lin}
For any integer $n\ge 1$, we have
$$
L_n(q) < (2p-2)q^{n-1} + 2q^{n-\varphi(n)},
$$
where $\varphi$ is Euler's totient function. In particular, we have 
$L_n(q) < 2pq^{n-1}$.
\end{theorem}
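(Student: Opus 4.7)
The plan is to follow the strategy of Theorem~\ref{thm:Nd bound U} from~\cite{KLMMSS}, which conjugates each polynomial into a canonical form via the affine group $\{\psi_{\lambda,\mu}(X)=\lambda X+\mu:\lambda\in\F_q^*,\mu\in\F_q\}$ and then applies a Burnside-type count to the residual stabilizer. First I would compute how this group acts on $\cL(X)=\sum_{i=0}^n a_iX^{p^i}$. Using the characteristic-$p$ identity $(\lambda X+\mu)^{p^i}=\lambda^{p^i}X^{p^i}+\mu^{p^i}$,
\begin{equation*}
\psi_{\lambda,\mu}^{-1}\circ\cL\circ\psi_{\lambda,\mu}(X)=\sum_{i=0}^n a_i\lambda^{p^i-1}X^{p^i}+\lambda^{-1}\(\cL(\mu)-\mu\),
\end{equation*}
so the $\mu$-translation contributes only a constant term and the $\lambda$-scaling acts coefficientwise as $a_i\mapsto a_i\lambda^{p^i-1}$, leaving $a_0$ fixed. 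Since conjugate maps define the same functional graph, it suffices to count orbits under this action.

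The next step is normalization. Since $a_n\ne 0$, I use $\lambda$-scaling to put $a_n$ into a fixed set of $\gcd(p^n-1,q-1)$ coset representatives of $(\F_q^*)^{p^n-1}$ in $\F_q^*$. A second source of equivalence comes from the translation part: whenever the affine linearised polynomial $\cL(X)+c$ has a fixed point $\beta$, i.e.\ whenever $c\in\mathrm{Im}(\cL-\mathrm{id})$, one has $\tau_\beta^{-1}\circ(\cL+c)\circ\tau_\beta=\cL$; this is generic and collapses all ``constant shifts'' of $\cL$ into a single dynamical class, providing the extra factor of $q$ needed to bring the main term down from the naive $q^n$ (from $\lambda$-scaling alone) to $(2p-2)q^{n-1}$. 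The constant $p-1$ reflects the trivial action of $\F_p^*\subseteq\F_q^*$ on every linearised polynomial (since $\lambda^{p^i-1}=1$ for all $\lambda\in\F_p^*$), and the factor of $2$ comes from splitting into the cases $p\mid n$ and $p\nmid n$, in direct analogy with the $p\mid d$/$p\nmid d$ split in Theorem~\ref{thm:Nd bound U}.

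For the correction term, a non-trivial scaling $\lambda$ of order $r$, with $e=\mathrm{ord}_r(p)$, fixes only those linearised polynomials whose support lies in $\{i:e\mid i\}$, so $|\mathrm{Fix}(\lambda)|\le(q-1)q^{n/e}$. Summing over non-trivial $\lambda\in\F_q^*$ exactly as in~\cite{KLMMSS}, where the role of $d-1$ is now played by $n$, bounds the total correction by $2q^{n-\varphi(n)}$: the $\varphi(n)$ appears because only $\varphi(n)$ of the exponents $i\in\{1,\ldots,n\}$ contribute ``generic'' constraints, and the remaining $n-\varphi(n)$ furnish the fixed-point-set exponent. The main technical obstacle will be the bookkeeping to pin down the precise constants $(2p-2)$ and $2$ and to verify that the $\tau_\beta$-equivalence really delivers the extra factor of $q$ in the main term; the rest of the argument will be a direct transcription of the proof of~\cite[Theorem~1]{KLMMSS}.
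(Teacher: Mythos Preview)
Your high-level plan—conjugate by the full affine group $\{\phi_{\lambda,\mu}:X\mapsto\lambda X+\mu\}$ and run a Burnside count—is exactly what the paper does, and your computation of $\phi_{\lambda,\mu}^{-1}\circ\cL\circ\phi_{\lambda,\mu}$ is correct. But two of your explanations for the constants are wrong and would derail the bookkeeping.

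First, there is \emph{no} $p\mid n$ versus $p\nmid n$ split in this proof, and no analogue of the extra $(q-1)q^{d/p-1}$ term from Theorem~\ref{thm:Nd bound U}. The fixed-point conditions here are $a_i(\lambda^{p^i}-\lambda)=0$ for all $i$ together with $\cL(\mu)=\mu$; the leading condition forces $\lambda^{p^n}=\lambda$, and the case distinction is between $\lambda\in\F_p^*$ (all $a_i$ free) and $\lambda\notin\F_p$ with $\lambda^{p^n-1}=1$. The $(2p-2)q^{n-1}$ comes entirely from the first case: the $p-1$ values $\lambda\in\F_p^*$ contribute $(q-1)q^n$ when $\mu=0$ and at most $(q-1)q^{n-1}$ for each of the $q-1$ nonzero $\mu$, and after dividing by $(q-1)q$ these two pieces each give roughly $(p-1)q^{n-1}$. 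Second, your description of the role of $\mu$ is off. You speak of translations collapsing ``constant shifts $\cL+c$'', but the set under consideration contains no such shifts; the point is rather that for $\mu\ne0$ the fixed-point condition $\cL(\mu)=\mu$ pins down $a_0$ in terms of $a_1,\dots,a_n$, which is what buys the extra factor of $q$. Finally, for the secondary term the paper does not sum your per-$\lambda$ bound $(q-1)q^{\,n/e}$ with $e=\mathrm{ord}_r(p)$; instead it observes uniformly that if $\lambda^{p-1}\ne1$ and $\lambda^{p^n-1}=1$ then $\gcd(p^j-1,p^n-1)=p-1$ for every $j$ with $\gcd(j,n)=1$, forcing $a_j=0$ for all $\varphi(n)$ such $j$, so $M_n(\lambda,\mu)\le(q-1)q^{\,n-\varphi(n)}$ (and one power of $q$ less when $\mu\ne0$); there are fewer than $q$ such $\lambda$, yielding the $2q^{\,n-\varphi(n)}$.
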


\begin{proof}
We use the same idea as in Theorem~\ref{thm:sparse1}.
For $\lambda \in \F_q^*$ and $\mu \in \F_q$, we define the bijection from $\F_q$ to itself 
\begin{equation}
\label{eq:phi}
\phi_{\lambda,\mu}:\  X \mapsto \lambda X + \mu
\end{equation}
with inverse $\phi_{\lambda,\mu}^{-1}: \  X \mapsto \lambda^{-1}(X - \mu)$. Particularly, these bijections form a group of order $(q-1)q$ in the usual way, which acts on the set of polynomials $\cL(X)$ of the form \eqref{eq:lin} as the map
$$
\cL(X) \to \phi_{\lambda,\mu}^{-1} \circ \cL \circ \phi_{\lambda,\mu}(X).
$$

As before, the number of the orbits of the above group action can be calculated by the Burnside counting formula. This implies that 
$$
L_n(q)\le \frac{1}{(q-1)q}\sum_{\lambda,\mu}M_n(\lam,\mu),
$$
where the sum runs over all pairs $(\lambda,\mu)\in\F_q^*\times \F_q$, and $M_n(\lambda,\mu)$ is the number of linearised polynomials of the form~\eqref{eq:lin} of degree $p^n$ fixed by the automorphism
$\phi_{\lambda,\mu}$ under the above group action. 

Simple computations show that the set of polynomials $\cL(X)$ of the form~\eqref{eq:lin} which are fixed by $\phi_{\lambda,\mu}$ are the polynomials that satisfy the conditions
\begin{equation}
\label{eq:cond1}
a_i(\lambda^{p^i}-\lambda)=0,\ i=0,\ldots,n,\quad \textrm{and}\quad  \cL(\mu)=\mu.
\end{equation}
In particular, as $a_n\ne 0$, we have $\lambda^{p^n}=\lambda$.
For fixed $\lambda,\mu$, we now count the coefficients $a_0,\ldots,a_n$ of $\cL(X)$ that satisfy the conditions~\eqref{eq:cond1}. 

Trivially, for $\lambda$ with $\lambda^{p^n}\ne \lam$, we have
$$
M_n(\lambda,\mu)=0.
$$

We consider first the case $\lambda\in\F_p^*$, that is $\lambda^p=\lam$. For $\mu=0$ we trivially have
$$
M_n(\lambda,0)=(q-1)q^n
$$
for $p-1$ values of $\lambda$. 
For $\mu\ne 0$, if we fix $a_1,\ldots,a_n$, the coefficient $a_0$ is uniquely defined by $\cL(\mu)=\mu$ in~\eqref{eq:cond1}, and thus one gets
$$
M_n(\lambda,\mu)\le (q-1)q^{n-1}
$$
for  $p-1$ values of $\lambda$ and at most $q-1$ values of $\mu$.

We now consider $\lambda^{p}\ne \lam$ and $\lambda^{p^n}=\lam$. We notice that for $1\le j<n$ with $\gcd(j,n)=1$, one has
$$
\gcd\(p^j-1,p^n-1\)=p^{\gcd(j,n)}-1=p-1;
$$ 
thus, as $\lambda^{p-1}\ne 1$, one also has $\lambda^{p^j-1}\ne 1$ and $a_j=0$ by~\eqref{eq:cond1}. In this case, we get 
\begin{equation*}
\begin{split}
M_n(\lambda,\mu)
&\le \left\{\begin{array}{ll}
(q-1)q^{n-\varphi(n)} & 
\text{if $\mu = 0$,}
\\
(q-1)q^{n-1-\varphi(n)} &
\text{if $\mu \ne 0$}.
\end{array}
\right.
\end{split}
\end{equation*}
Since $\lambda^{p^n-1}=1$ and $\lambda^{p-1}\ne 1$, the element $\lambda$ can take at most $\gcd(p^n-1,q-1)-p+1<q$ values.  

Putting everything together, we obtain the bound
$$
L_n(q)< 2(p-1)q^{n-1}+2q^{n-\varphi(n)},
$$
which completes the proof.
\end{proof}

\subsection{Explicit formulas}
Although the general case of polynomials has been studied in \cite{KLMMSS} and in Theorem~\ref{thm:sparse}, it is still worth studying some cases related to special kinds of polynomials. 
Here, for some special kinds of polynomials over $\F_q$ (like linear and power maps), we get explicit formulas for the total number of corresponding non-equivalent dynamical systems. 

Some of these results are straight-forward and  probably well-known, but we give them just 
for completeness of the presentation and to exhibit different types of behaviour.

First, we remark that for a permutation polynomial $f\in\F_q$, every point of $\F_q$ is periodic, and thus, the structure of $\cG_f$ is determined completely by its cycle structure. 

As we know, linear congruential generator and power generator are two classical and simple ways to generate pseudorandom numbers. 
Their cycle structure was extensively studied in~\cite{ChouShp,KurPom,MartPom,ShaHu,SoKr,VaSha} and references therein. The following two theorems suggest that there are not too many such generators up to equivalence.

For linear congruential generator, it is very well known that the cycle structure is completely determined by the distribution of the orders of elements of $\F_q$. 
The next result should be well-known, but for the convenience of the reader (or for the completeness), we present a proof.
\begin{theorem}
\label{thm:lin}
The number of non-equivalent dynamical systems over $\F_q$ generated by the polynomials $f(X)=aX+b, a\in \F_q^*,b\in \F_q$, is equal to $\tau(q-1)+1$, where $\tau(q-1)$ is the number of distinct positive divisors of $q-1$.
\end{theorem}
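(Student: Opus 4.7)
The plan is to split $f(X) = aX + b$ into the cases $a = 1$ and $a \neq 1$, describe the functional graph in each case, and count equivalence classes, checking that the two cases do not overlap. Since $f$ is a bijection of $\F_q$, every point is periodic, so the isomorphism type of $\cG_f$ is determined by the multiset of cycle lengths.

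First I would handle $a = 1$. If $b = 0$ then $f = \mathrm{id}$ and $\cG_f$ consists of $q$ loops. If $b \neq 0$, then the additive order of $b$ in $\F_q$ equals the characteristic $p$, so $f^{(p)}$ is the identity while $f^{(j)} \neq \mathrm{id}$ for $1 \le j < p$; hence every element of $\F_q$ has exact period $p$ and $\cG_f$ is a disjoint union of $q/p$ cycles of length $p$. This case therefore contributes exactly two equivalence classes, one being the graph of all loops and the other a fixed-point-free graph.

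Next I would handle $a \neq 1$. The equation $aX + b = X$ has the unique solution $x_0 = b/(1-a)$, and conjugation of $f$ by the bijection $\phi(X) = X + x_0$ yields $\phi^{-1} \circ f \circ \phi(X) = aX$, so $f$ is dynamically equivalent to the linear map $g(X) = aX$. For $g$, the origin is the unique fixed point and each nonzero $x$ has orbit of size $\mathrm{ord}(a)$ since $g^{(n)}(x) = a^n x$. Thus $\cG_g$ consists of a loop at $0$ together with $(q-1)/\mathrm{ord}(a)$ cycles of length $\mathrm{ord}(a)$, and its isomorphism class depends only on $\mathrm{ord}(a)$. As $a$ ranges over $\F_q^* \setminus \{1\}$, the value $\mathrm{ord}(a)$ ranges exactly over the divisors $d \ge 2$ of $q-1$, giving $\tau(q-1) - 1$ classes in this case.

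Finally I would verify that no class from the $a=1$ case coincides with one from the $a \neq 1$ case --- the identity has $q$ fixed points whereas each $g$ with $a \neq 1$ has exactly one, and the translation case is fixed-point-free --- and then sum to get $2 + (\tau(q-1) - 1) = \tau(q-1) + 1$. The only (mild) obstacle is this bookkeeping at the boundary: the identity must be counted once as an $a=1$ case and must not also be absorbed into a ``$d=1$'' subcase on the $a \neq 1$ side. This is exactly the origin of the extra ``$+1$'' in the formula, which records the fixed-point-free translation orbit structure that has no analog among the maps $g(X) = aX$.
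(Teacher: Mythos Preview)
Your proof is correct and follows essentially the same strategy as the paper: reduce each affine map to a canonical form by conjugation, read off the cycle structure, and count the resulting isomorphism types. The only cosmetic difference is the case split and the conjugating map---you split on $a=1$ versus $a\neq 1$ and conjugate by the translation $X\mapsto X+x_0$ to reach $aX$, whereas the paper splits on $b=0$ versus $b\neq 0$ and conjugates by the dilation $X\mapsto bX$ to reach $aX+1$; your reduction is marginally more direct since it lands on $aX$ immediately.
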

\begin{proof}
We first consider the dynamical system generated by $f(X)=aX, a\in \F_q^*$. Since for any integer $n\ge 1$ we have $f^{(n)}(X)=a^nX$, it is easy to see that $\cG_f$ has only one fixed point (that is 0) and $(q-1)/m$ cycles of length $m$, where $m$ is the multiplicative order of $a$ in $\F_q^*$ ($m$ divides $q-1$). 

Now, consider $f(X)=aX+b, a\in \F_q^*,b\in \F_q^*$.
Let $\psi$ be the automorphism of $\F_q$ defined by $\psi(X)=bX$. 
Then, we have $\psi^{-1} \circ f \circ \psi = aX+1$. 
Thus, we only need to consider the dynamical system generated by $g(X)=aX+1$. It is also straightforward to see that $\cG_g$ has no fixed point if $a=1$ and otherwise it has only one fixed point (that is $1/(1-a)$) and $(q-1)/m$ cycles of length $m$, where $m$ is the multiplicative order of $a$ in $\F_q^*$ ($m$ divides $q-1$). 

Finally, we conclude the proof by collecting the above results.
\end{proof}

To give a taste of the result in Theorem~\ref{thm:lin}, we indicate that for the divisor function $\tau$, which counts the number of positive divisors of an integer, it is well-known that 
$$
\tau(n)= o(n^{\epsilon})
$$
for any integer $n\ge 1$ and any $\epsilon >0$; for example see  \cite[Formula (31), page 296]{Apostol}. 
In particular, we note that $\tau(n)$ can vary from $2$ to $2^{\log(n)/\log\log(n)}$ for highly composite $n$, see~\cite[Theorem 13.12]{Apostol}.

From the proof of Theorem~\ref{thm:lin}, if  $a$ is a primitive element of $\F_q$ (that is the multiplicative order of $a$ is $q-1$), then the corresponding graph $\cG_f$ only has two cycles, one of length 1 and the other of length $q-1$.
 
Even if the cycle structure of the power generator has been actively studied in~\cite{ChouShp,KurPom,MartPom,ShaHu,SoKr,VaSha} and references therein, the number of distinct functional graphs defined by such maps seems not to have been studied, and thus we present such a result here. 

\begin{theorem}
\label{thm:power}
For a fixed integer $d\ge 1$, the number of non-equivalent dynamical systems over $\F_q$ generated by the polynomials $f(X)=aX^d, a\in \F_q^*$, is equal to $\tau(\gcd(d-1,q-1))$.
\end{theorem}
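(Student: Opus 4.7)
The plan is to prove the equality of counts via matching upper and lower bounds. Set $N=q-1$, $e=\gcd(d-1,N)$, and fix a primitive root $h$ of $\F_q^*$ so that each $a\in\F_q^*$ is uniquely written $a=h^\alpha$ for $\alpha\in\Z/N\Z$.

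For the upper bound I would follow the bijection approach of the preceding theorems. The map $\psi_\lambda:X\mapsto\lambda X$, for $\lambda\in\F_q^*$, conjugates $f_a(X)=aX^d$ into $f_{\lambda^{d-1}a}$, and the map $\sigma_u:X\mapsto X^u$, for $\gcd(u,N)=1$ (extended by $0\mapsto 0$), conjugates $f_a$ into $f_{a^{u^{-1}}}$. In log coordinates these two operations read $\alpha\mapsto\alpha+(d-1)c$ and $\alpha\mapsto u^{-1}\alpha$. Since $(d-1)\Z/N\Z=e\Z/N\Z$, the combined equivalence on $\Z/N\Z$ descends modulo $e$ to the action of $(\Z/e\Z)^*$ on $\Z/e\Z$ by multiplication (using the standard surjection $(\Z/N\Z)^*\twoheadrightarrow(\Z/e\Z)^*$). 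Its orbits are classified by the divisors of $e$, giving at most $\tau(e)$ equivalence classes.

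For the lower bound I would exhibit a graph invariant that distinguishes the $\tau(e)$ classes. The sequence $N_n:=\#\{x\in\F_q:f_a^{(n)}(x)=x\}$ is invariant under graph isomorphism, and a direct computation gives $N_n=1+e_n$ when $h_n\mid\alpha$ and $N_n=1$ otherwise, where $e_n=\gcd(d^n-1,N)$ and $h_n:=e_n/\gcd(e_n,m_n)$ with $m_n=(d^n-1)/(d-1)$. A prime-by-prime $p$-adic check establishes that $h_n\mid e$ for every $n$, so the condition $h_n\mid\alpha$ depends on $\alpha$ only through $\gcd(\alpha,e)$; hence $(N_n)_{n\ge1}$ is an invariant of the orbit identified in the upper bound.

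The main technical obstacle is showing that $(N_n)$ actually separates the $\tau(e)$ values of $\gcd(\alpha,e)$; it suffices to prove that every divisor $g\mid e$ is realized as $h_n$ for some $n$. I would obtain this by a $p$-adic valuation analysis: for each prime $p\mid e$, the lifting-the-exponent lemma gives $v_p(d^n-1)=v_p(d-1)+v_p(n)$ (with a standard case split at $p=2$), from which $v_p(h_n)$ ranges over all of $\{0,1,\dots,v_p(e)\}$ as $v_p(n)$ varies, and the Chinese remainder theorem lets us pick a single $n$ realizing any prescribed divisor $g\mid e$ as $h_n$. Combined with the upper bound, this yields exactly $\tau(e)$ equivalence classes.
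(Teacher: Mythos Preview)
Your argument is correct and differs from the paper's in a meaningful way. For the upper bound, the paper does not use conjugation at all: it computes $f_a^{(n)}(X)=a^{m_n}X^{d^n}$, asserts (with minimal justification) that the isomorphism type of $\cG_{f_a}$ is determined by its cycle structure, and then shows the solvability pattern of $a^{m_n}X^{d^n-1}=1$ depends only on $\gcd(d-1,q-1,e(a))$. Your route via the conjugations $\psi_\lambda$ and $\sigma_u$ sidesteps any need to discuss the tree parts of the graph and yields the bound directly; this is a genuine gain in clarity and rigor, especially when $\gcd(d,q-1)>1$ so that $f_a$ is not a permutation. For the lower bound both proofs use the invariant $N_n=\#\{x:f_a^{(n)}(x)=x\}$, but the paper simplifies the divisibility threshold further: using $m_n\equiv n\pmod{d-1}$ it shows the relevant quantity is $\gcd\bigl(d-1,\,(q-1)/\gcd(n,q-1)\bigr)$, from which realizability of every divisor of $e$ is immediate (just let $\gcd(n,q-1)$ run over the divisors of $q-1$). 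Your LTE computation reaches the same conclusion but with more case analysis, particularly at $p=2$; you might note that the paper's simplification would shorten this half of your proof considerably. One small point: for $d=1$ the LTE formulation degenerates ($v_p(d-1)=\infty$), but in that case $h_n=N/\gcd(N,n)$ directly and realizability is trivial.
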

\begin{proof}
Given $f(X)=aX^d, a\in \F_q^*$, for any integer $n\ge 1$, we have 
$$
f^{(n)}(X)=a^{1+d+\cdots + d^{n-1}}X^{d^n}.
$$
So, the structure of $\cG_f$ is determined completely by its cycle structures. 

Let $\al$ be a primitive element of $\F_q^*$. So, there exists a positive integer $e(a)$ such that $a=\al^{e(a)}$. 
Then, there exists $x\in \F_q^*$ and integer $n\ge 1$ such that $f^{(n)}(x)=x$ if and only if 
the equation 
\begin{equation}
\label{eq:power eq}
a^{1+d+\cdots + d^{n-1}}X^{d^n-1}=1
\end{equation}
has solution in $\F_q^*$, 
which is equivalent to that the equation 
\begin{equation}
\label{eq:lin eq}
(d^n-1)Y+ e(a)(1+d+\cdots + d^{n-1}) \equiv 0 \quad \textrm{(mod $q-1$)}
\end{equation}
with variable $Y$ has solution. It is well-known that the equation \eqref{eq:lin eq} has solution if and only if 
$$
\gcd(d^n-1,q-1) \mid e(a)(1+d+\cdots + d^{n-1}),
$$
that is
$$
\gcd\(d-1,\frac{q-1}{\gcd(1+d+\cdots + d^{n-1},q-1)}\) \mid e(a).
$$
Since 
\begin{align*}
1+d+ \cdots + d^{n-1} & =1+((d-1)+1) + \cdots + ((d-1)+1)^{n-1} \\
&=n+s(d-1)
\end{align*}
for some integer $s$, 
 for any positive integer $t$ satisfying $t \mid \gcd(d-1,q-1)$, we get that $t \mid \gcd(n+s(d-1),q-1)$  if and only if $t \mid \gcd(n,q-1)$. 
 This implies that 
 $$
 \gcd\(d-1,\frac{q-1}{\gcd(n+s(d-1),q-1)}\) =
 \gcd\(d-1,\frac{q-1}{\gcd(n,q-1)}\). 
 $$
  Thus, the above equivalent condition becomes 
 $$
\gcd\(d-1,\frac{q-1}{\gcd(n,q-1)}\) \mid e(a), 
$$
which coincides with 
$$
\gcd\(d-1,\frac{q-1}{\gcd(n,q-1)}\) \mid \gcd(d-1,q-1,e(a)). 
$$
Moreover, if the equation \eqref{eq:lin eq} has solution, then there are exactly $\gcd(d^n-1,q-1)$ solutions modulo $q-1$ (for example see \cite[Proposition 3.3.1]{Ireland}), and thus the equation \eqref{eq:power eq} has exactly $\gcd(d^n-1,q-1)$ solutions. 
Hence, the cycle structures of $\cG_f$ depend only on $\gcd(d-1,q-1,e(a))$. 

Notice that when $n$ tends to infinity, the term $\gcd\(d-1,\frac{q-1}{\gcd(n,q-1)}\)$ can run through all the factors of $\gcd(d-1,q-1)$. Hence, the number of non-equivalent dynamical systems generated by the polynomials $f(X)=aX^d, a\in \F_q^*,$ is equal to $\tau(\gcd(d-1,q-1))$. 
\end{proof}

Recall that $q=p^k$. We also recall the norm function $\Nm_{\F_q/\F_p}(x)=x^{1+p+\cdots+p^{k-1}}$ and the trace function $\Tr_{\F_q/\F_p}(x)=x+x^p+\cdots+x^{p^{k-1}}$ for any $x\in\F_{q}$. The well-known Hilbert's Theorem 90 says that for any $x\in \F_q$, 
\begin{align}
\label{Hilbert90}
& \textrm{$\Nm_{\F_q/\F_p}(x)=1$ if and only if $x=z/z^p$ for some $z\in \F_q$,} \\
& \textrm{$\Tr_{\F_q/\F_p}(x)=0$ if and only if $x=z-z^p$ for some $z\in \F_q$.} \notag
\end{align}

The following result sounds interesting.

\begin{theorem}
\label{thm:power p}
There are only two non-equivalent dynamical systems over $\F_q$ generated by the polynomials $f(X)=aX^p+b, a \in \F_q^*,b\in \F_q$ with $\Nm_{\F_q/\F_p}(a)=1$, depending on whether $\cG_f$ has fixed point or not. In particular, if  $\cG_f$ has fixed point, then it has precisely $p$ fixed points. 
\end{theorem}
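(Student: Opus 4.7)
The plan is to use affine bijections $\psi(x) = cx + d$ (with $c \in \F_q^*$, $d \in \F_q$) to conjugate every $f(X) = aX^p + b$ satisfying $\Nm_{\F_q/\F_p}(a) = 1$ into a canonical form, with the existence or non-existence of a fixed point as the only invariant. As a first step, the multiplicative form of Hilbert's Theorem 90 in~\eqref{Hilbert90} applied to $a^{-1}$ yields $z\in\F_q^*$ with $z^{p-1}=a^{-1}$, and conjugating $f$ by $\psi(x)=zx$ gives
$$
\psi^{-1}\circ f\circ \psi(x) = az^{p-1}x^p+b/z = x^p + B, \qquad B = b/z.
$$
So I may replace $f$ by $F(X) = X^p + B$ and study its fixed points.

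Next, $F$ has a fixed point iff $x^p - x = -B$ is solvable, and the additive form of Hilbert 90 in~\eqref{Hilbert90} identifies the image of $x \mapsto x^p - x$ with the trace-zero $\F_p$-subspace of $\F_q$; hence a fixed point exists precisely when $\Tr_{\F_q/\F_p}(B) = 0$. If this is the case, I would pick a fixed point $x_0$ and conjugate by $\psi(x) = x + x_0$, obtaining
$$
\psi^{-1}\circ F\circ \psi(x) = (x+x_0)^p + B - x_0 = x^p + (F(x_0)-x_0) = x^p,
$$
proving that $f$ is dynamically equivalent to the Frobenius $X \mapsto X^p$. Since the fixed points of $X \mapsto X^p$ are exactly $\F_p \subset \F_q$, this both establishes one equivalence class and gives the claimed count of $p$ fixed points.

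It remains to show that all $F(X) = X^p + B$ with $\Tr_{\F_q/\F_p}(B) \neq 0$ are mutually equivalent, which I expect to be the main obstacle. I would use conjugations $\psi(x) = \mu x + \delta$ with $\mu \in \F_p^*$ (so $\mu^{p-1}=1$ keeps the leading coefficient at $1$); a direct computation gives
$$
\psi^{-1}\circ F\circ \psi(x) = x^p + \mu^{-1}\bigl(B + \delta^p - \delta\bigr),
$$
whose constant term has trace $\mu^{-1}\Tr_{\F_q/\F_p}(B)$, because $\Tr_{\F_q/\F_p}(\delta^p - \delta) = 0$. To reach a target $B'$ with $\Tr_{\F_q/\F_p}(B') \ne 0$, I would take $\mu = \Tr_{\F_q/\F_p}(B)/\Tr_{\F_q/\F_p}(B') \in \F_p^*$; then $\mu B' - B$ has trace $0$, so additive Hilbert 90 produces $\delta \in \F_q$ with $\delta^p - \delta = \mu B' - B$, and the conjugation above sends $B$ to $B'$. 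The subtle point is that the $\F_p^*$-scaling is genuinely needed in addition to translation to match traces, and the three invocations of Hilbert~90 (two additive, one multiplicative) must be applied in the right order.
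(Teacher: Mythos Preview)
Your proof is correct. The reduction to $X^p+B$ and the treatment of the fixed-point case coincide with the paper's argument. Where you diverge is in the no-fixed-point case: the paper does \emph{not} exhibit an explicit conjugacy between $X^p+B$ and $X^p+B'$ when $\Tr_{\F_q/\F_p}(B),\Tr_{\F_q/\F_p}(B')\ne 0$. Instead it computes the iterates $f_B^{(n)}(X)=X^{p^n}+\sum_{i<n}B^{p^i}$, shows every cycle length divides $kp$ (from $f_B^{(kp)}(x)=x$) but not $k$ (from $f_B^{(k)}(x)=x+\Tr(B)\ne x$), deduces that all cycle lengths have the form $p^{e+1}s$ with $s\mid r$ where $k=p^er$, $\gcd(r,p)=1$, and then observes that for each such $m$ the number of points of period dividing $m$ equals $p^{\gcd(m,k)}$, which is independent of $B$. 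This pins down the isomorphism type of $\cG_{f_B}$ without ever naming the conjugating map.

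Your route via the affine conjugation $\psi(x)=\mu x+\delta$ with $\mu\in\F_p^*$ is shorter and more constructive: it produces the equivalence directly and makes transparent that the only obstruction is matching traces in $\F_p^*$. The paper's route, while longer, yields as a byproduct the complete cycle spectrum of the no-fixed-point class, information your argument does not extract.
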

\begin{proof}
First, we note that for any $a,b\in \F_q$, $a\ne 0$, the polynomial $f(X)=aX^p+b$ defines naturally a bijection from $\F_q$ to itself. Thus, all the elements of $\F_q$ are periodic points of $\cG_f$. 

Under the assumption $\Nm_{\F_q/\F_p}(a)=1$, by \eqref{Hilbert90} there exists $z\in \F_q$ such that $a=z/z^p$. Defining an automorphism $\psi$ as $\psi(X)=zX$, we have  
$$
\psi^{-1} \circ f \circ \psi (X) = X^p + z^{-1}b.
$$
So, we only need to consider the polynomials $f_b(X)=X^p + b, b\in \F_q$. 

For any integer $n\ge 1$, we have 
$$
f_{b}^{(n)}(X)=X^{p^n}+ b^{p^{n-1}}+\cdots + b^p + b.
$$
Suppose that there exist integer $m \ge 1$ and $x\in \F_q$ such that 
$f_{b}^{(m)}(x)=x$. Then, for any solution $y$ of the equation $X^{p^m}=X$, we have 
$f_{b}^{(m)}(x+y)=x+y$; actually this runs over all the elements of $\F_q$ satisfying $f_{b}^{(m)}(X)=X$. Thus, the number of vertices of $\cG_{f_b}$ with period dividing $m$ is exactly $p^{\gcd(m,k)}$. 

In addition, note that 
$$
f_{b}^{(k)}(X)=X^{q}+ b^{p^{k-1}}+\cdots + b^p + b = X^q + \Tr_{\F_q/\F_p}(b).
$$
We obtain 
$f_{b}^{(kp)}(X)=X^{q^p}$, and thus for any $x\in \F_q$ we have 
$$
f_{b}^{(kp)}(x)=x.
$$
So, for any cycle length $m$ of $\cG_{f_b}$, we have $m \mid kp$. 

By \eqref{Hilbert90}, $\cG_{f_b}$ has fixed point if and only if $\Tr_{\F_q/\F_p}(b)=0$. Since there exists $z\in \F_q$ such that $b=z-z^p$, we define an automorphism $\psi$ as $\psi(X)=X+z$ and derive that 
$$
\psi^{-1} \circ f_b \circ \psi = X^p, 
$$
which means that all these polynomials $f_b$ with $\Tr_{\F_q/\F_p}(b)=0$ generate the same functional graph. Clearly, this graph has precisely $p$ fixed points. 

Now, we consider polynomials $f_b$ with $\Tr_{\F_q/\F_p}(b)\ne 0$. 
Write $k$ as $k=p^e r$ with integer $e\ge 0$ and $\gcd(r,p)=1$, 
and let $c(b)$ be the smallest cycle length of $\cG_{f_b}$. 
Notice that for any $x\in \F_q$ we have $f_{b}^{(k)}(x)=x + \Tr_{\F_q/\F_p}(b)$. So,  there is no element $x\in \F_q$ such that $f_{b}^{(k)}(x)=x$, and thus  $c(b) \nmid k$. On the other hand, we have known that the number of vertices of $\cG_{f_b}$ with period $c(b)$ is exactly $p^{\gcd(c(b),k)}$, 
which implies that $c(b)$ is some power of $p$. 
Noticing $c(b) \mid kp$, we must have $c(b)=p^{e+1}$. 
We can also see that any cycle length $m$ of $\cG_{f_b}$ has the form $m=p^{e+1}s$ with some integer $s\mid r$ (because $m \mid kp$ and $m \nmid k$), and so $c(b) \mid m$. Then, by the discussion in the third paragraph, for any cycle length $m$, the number of vertices of $\cG_{f_b}$ with period dividing $m$ is exactly $p^{\gcd(m,k)}$,  which is independent of $b$. Thus, all such polynomials $f_b$ with $\Tr_{\F_q/\F_p}(b)\ne 0$ generate the same functional graph. This concludes the proof. 
\end{proof}

In fact, we can get more general result. 

\begin{theorem}
\label{thm:power p2}
The number of non-equivalent dynamical systems over $\F_q$ generated by the polynomials $f(X)=aX^p+b, a \in \F_q^*,b\in \F_q$ is equal to $\tau(p-1)+1$. In particular, there is only one such system up to equivalence having no fixed point. Moreover, if $\cG_f$ has fixed point, then it has exactly $p$ fixed points if $\Nm_{\F_q/\F_p}(a)=1$, and otherwise it has only one fixed point. 
\end{theorem}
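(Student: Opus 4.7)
The plan is to extend Theorem~\ref{thm:power p} by parametrising the equivalence classes by the order $d$ of $N := \Nm_{\F_q/\F_p}(a)$ in $\F_p^*$. The case $N = 1$ (i.e.\ $d = 1$) contributes the two classes from Theorem~\ref{thm:power p}, while for each divisor $d > 1$ of $p - 1$ I will show there is exactly one class, yielding a total of $2 + (\tau(p-1) - 1) = \tau(p-1) + 1$ classes.

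First I would verify the fixed-point count when $N \ne 1$. A fixed point of $f$ satisfies $ax^p - x = -b$; the $\F_p$-linear map $T : x \mapsto ax^p - x$ on $\F_q$ has kernel $\{0\} \cup \{x \in \F_q^* : x^{p-1} = a^{-1}\}$, which by the identity $(\F_q^*)^{p-1} = \ker \Nm_{\F_q/\F_p}$ is nontrivial precisely when $N = 1$. Hence for $N \ne 1$ the map $T$ is bijective, producing a unique fixed point $x_0$; conjugating by $\psi(X) = X + x_0$ reduces $f$ to $g(X) = a X^p$.

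The central step is to prove that the cycle structure of $g$ on $\F_q^*$ depends only on $d$. Fixing a generator $\alpha$ of $\F_q^*$ and writing $a = \alpha^s$, under the exponent identification $\F_q^* \simeq \Z/(q-1)\Z$ the map $g$ becomes $t \mapsto pt + s$. For each $m \ge 1$ with $e := \gcd(m, k)$, we have $\gcd(p^m - 1, q-1) = p^e - 1$, and $g^{(m)}(t) = t$ reduces to a linear congruence with either $p^e - 1$ solutions or none, according to whether $p^e - 1$ divides $s(p^m - 1)/(p-1)$ modulo $q - 1$. Writing $(p^m - 1)/(p-1) = (p^e - 1)/(p-1) \cdot (1 + p^e + \cdots + p^{m-e})$ and cancelling the factor $(p^e - 1)/(p-1)$, the divisibility collapses to $(p-1) \mid s \cdot m/e$; and since $\gcd(s, p-1) = (p-1)/d$, this simplifies further to the $s$-free criterion $d \cdot e \mid m$. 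Consequently $|\mathrm{Fix}(g^{(m)})|$ equals $p^e - 1$ when $d \gcd(m, k) \mid m$ and equals $0$ otherwise, depending only on $d$.

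To finish, the order of $g$ on $\F_q^*$ equals $kd$ (the least common multiple of the cycle lengths), so distinct divisors $d$ of $p-1$ produce non-isomorphic functional graphs. Combining with the two classes from Theorem~\ref{thm:power p} (distinguished by having $0$ or $p$ fixed points) and noting that each $d > 1$ class has exactly one fixed point, all three assertions of the theorem follow. The main obstacle is the arithmetic reduction of the divisibility $p^e - 1 \mid s(p^m - 1)/(p-1)$ to the $s$-free condition $d \gcd(m,k) \mid m$; once independence of $|\mathrm{Fix}(g^{(m)})|$ from $s$ within a norm class is established, the remainder is routine bookkeeping.
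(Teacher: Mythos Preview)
Your argument is correct, but it takes a different route from the paper's. The paper partitions by whether $\cG_f$ has a fixed point: if it does, a translation conjugates $f$ to $aX^p$ and Theorem~\ref{thm:power} (the power-map classification) immediately gives $\tau(\gcd(p-1,q-1))=\tau(p-1)$ classes; if it does not, an affine conjugation brings $f$ to $a'X^p+1$, and one shows $\Nm_{\F_q/\F_p}(a')=1$ is forced (otherwise $f^{(k)}(X)=X$ would have a unique solution, hence a fixed point), whereupon Theorem~\ref{thm:power p} supplies the single remaining class. You instead partition by the value of $N=\Nm_{\F_q/\F_p}(a)$, invoke Theorem~\ref{thm:power p} for $N=1$, and for $N\neq 1$ carry out a direct cycle-structure computation on $\Z/(q-1)\Z$ to show the class depends only on the order $d$ of $N$. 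Your arithmetic reduction $(p^e-1)\mid s(p^m-1)/(p-1)\Leftrightarrow d\gcd(m,k)\mid m$ is correct (the key points being $e\mid m$, the congruence $1+p^e+\cdots+p^{m-e}\equiv m/e\pmod{p-1}$, and $\gcd(s,p-1)=(p-1)/d$), and the order $kd$ together with the fixed-point count separates the classes. The paper's proof is shorter because it black-boxes Theorem~\ref{thm:power}; your proof is essentially re-deriving the $d=p$ case of that theorem, but with the payoff that it names the complete invariant---the multiplicative order of the norm---explicitly, which the paper's proof of Theorem~\ref{thm:power} only gives implicitly as $\gcd(p-1,e(a))$.
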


\begin{proof}
For $f(X)=aX^p+b, a \in \F_q^*,b\in \F_q$, we first suppose that $\cG_f$ has a fixed point. That is, there exists $z\in \F_q$ such that $az^p + b = z$. Then, defining an automorphism $\psi$ as $\psi(X)=X+z$, we get 
$$
\psi^{-1} \circ f \circ \psi = aX^p. 
$$
Thus, by Theorem \ref{thm:power}, the number of these systems up to equivalence is exactly $\tau(p-1)$. For such $\cG_f$, the number of its fixed points can also be easily obtained. 

Now, suppose that $\cG_f$ has no fixed point. 
That is, for any $\mu \in \F_q$ we have $a\mu^p+b-\mu \ne 0$. We fix one $\mu$ and put $\lam = a\mu^p+b-\mu$. Defining an automorphism $\psi$ as $\psi(X) = \lam X + \mu$, we obtain 
$$
\psi^{-1} \circ f \circ \psi = a \lam^{p-1}X^p + 1. 
$$
Thus, we only need to consider polynomials $f_a(X) = a X^p + 1, a \in \F_q^*,$ such that $\cG_{f_a}$ has no fixed point. For these polynomials $f_a$, assume that $\Nm_{\F_q/\F_p}(a) \ne 1$, which will lead to a contradiction.  
Indeed, we have 
$$
f_a^{(k)}(X) = \Nm_{\F_q/\F_p}(a) X^q + a^{1+p+\cdots + p^{k-2}} + \cdots + a +1. 
$$
Under the assumption $\Nm_{\F_q/\F_p}(a) \ne 1$, the equation $f_a^{(k)}(X)=X$ has  only one solution, say $y$, in $\F_q$ (note that $x^q=x$ for any $x\in \F_q$). So, $y$ must be a fixed point of $\cG_{f_a}$. This contradicts with the fact that $\cG_{f_a}$ has no fixed point. Thus, we must have $\Nm_{\F_q/\F_p}(a)=1$. Then, the desired result follows from Theorem \ref{thm:power p} and the above discussion. 
\end{proof}

It may deserve stating the following as a separate result. 

\begin{corollary}
\label{cor:power p}
The number of non-equivalent dynamical systems over $\F_q$ generated by the polynomials $f(X)=aX^p+b, a \in \F_q^*,b\in \F_q$ with $\Nm_{\F_q/\F_p}(a)\ne 1$ is equal to $\tau(p-1)-1$. In particular, these dynamical systems are not equivalent to those in Theorem \ref{thm:power p}.
\end{corollary}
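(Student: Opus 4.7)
The plan is to derive this corollary as a bookkeeping consequence of Theorem~\ref{thm:power p2} combined with Theorem~\ref{thm:power p}, using the number of fixed points as the distinguishing invariant between the two subcollections.

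First I would recall the totals already established. By Theorem~\ref{thm:power p2}, the entire family $\{aX^p+b : a\in\F_q^*,\ b\in\F_q\}$ produces exactly $\tau(p-1)+1$ non-equivalent dynamical systems: one class has no fixed point, and by the last assertion of Theorem~\ref{thm:power p2} the remaining $\tau(p-1)$ classes are indexed by whether the corresponding normalized form $aX^p$ has $\Nm_{\F_q/\F_p}(a)=1$ (giving precisely $p$ fixed points) or $\Nm_{\F_q/\F_p}(a)\ne 1$ (giving exactly one fixed point).

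Next I would account for the $\Nm_{\F_q/\F_p}(a)=1$ case using Theorem~\ref{thm:power p}: it contributes exactly two equivalence classes, one with $p$ fixed points (the orbit of $X^p$) and the one with no fixed points. Subtracting these two classes from the total $\tau(p-1)+1$ leaves $\tau(p-1)-1$ equivalence classes, and I would argue these are exactly the systems with $\Nm_{\F_q/\F_p}(a)\ne 1$: any such system must have a fixed point, because the no-fixed-point class is already shown in the proof of Theorem~\ref{thm:power p2} to force $\Nm_{\F_q/\F_p}(a)=1$; and once a fixed point exists, Theorem~\ref{thm:power p2} tells us the number of fixed points is exactly one in the $\Nm_{\F_q/\F_p}(a)\ne 1$ regime.

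For the final ``In particular'' statement, I would use the number of fixed points as a dynamical invariant: every system in Theorem~\ref{thm:power p} has either $0$ or $p$ fixed points, whereas each of the $\tau(p-1)-1$ systems of the corollary has exactly one fixed point, and since $p\ge 2$ this invariant separates the two collections. The main ``obstacle'' is really just making sure the condition $\Nm_{\F_q/\F_p}(a)\ne 1$ is well-defined on equivalence classes; this is handled not by tracking the coefficient $a$ through conjugations but by characterizing the classes intrinsically via the fixed-point count, which is what Theorem~\ref{thm:power p2} provides.
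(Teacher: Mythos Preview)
Your approach matches the paper's: both subtract the two classes of Theorem~\ref{thm:power p} from the $\tau(p-1)+1$ of Theorem~\ref{thm:power p2} and separate the two subfamilies via the fixed-point count. The only step the paper spells out that you leave to a citation is why ``no fixed point'' forces $\Nm_{\F_q/\F_p}(a)=1$ for the \emph{original} $a$: the proof of Theorem~\ref{thm:power p2} first conjugates $aX^p+b$ to $a\lambda^{p-1}X^p+1$ before concluding the norm of the leading coefficient is $1$, so one needs the one-line check $\Nm_{\F_q/\F_p}(a\lambda^{p-1})=\Nm_{\F_q/\F_p}(a)\cdot\Nm_{\F_q/\F_p}(\lambda)^{p-1}=\Nm_{\F_q/\F_p}(a)$, which the paper supplies explicitly in its proof of the corollary and which is exactly the content of your closing remark about the norm condition being well-defined on equivalence classes.
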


\begin{proof}
By Theorems \ref{thm:power p} and \ref{thm:power p2}, we only need to prove that for any polynomial $f(X)=aX^p+b, a \in \F_q^*,b\in \F_q$ with $\Nm_{\F_q/\F_p}(a)\ne 1$, the functional graph $\cG_f$ has fixed point. 
For such a polynomial $f$, assume that $\cG_f$ has no fixed point. Then, as in the proof of Theorem \ref{thm:power p2}, we see that there exists $\lam \in \F_q^*$ such that $\cG_f$ is isomorphic to $\cG_g$, where 
$$
g(X) = a\lam ^{p-1} X^p +1. 
$$ 
Note that 
$$
\Nm_{\F_q/\F_p}(a\lam^{p-1})=\Nm_{\F_q/\F_p}(a)\Nm_{\F_q/\F_p}(\lam)^{p-1}=\Nm_{\F_q/\F_p}(a) \ne 1, 
$$
which as before leads to a contradiction.  
This completes the proof. 
\end{proof}

\subsection{The case of rational functions}

For any rational function $f/g$, where $f,g\in \F_q[X]$, we can define a dynamical system over $\F_q$ as follows
\begin{equation}
\label{def:system}
\begin{split}
f/g: \F_q \to \F_q, \quad x 
&\mapsto\left\{\begin{array}{ll}
f(x)/g(x),& \text{if $g(x)\ne 0$,}\\
\al,&\text{otherwise,}
\end{array}
\right.
\end{split}
\end{equation}
where $\al \in \F_q$ is fixed. Besides, for the rational function $f/g$ we can define a dynamical system over the projective line $\PP^1(\F_q)=\F_q \cup \{\infty\}$ in the natural way: 
\begin{equation}
\label{def:system2}
f/g: \PP^1(\F_q) \to \PP^1(\F_q), \quad x \mapsto f(x)/g(x),
\end{equation}
where every pole of $f/g$ (after clearing common factors) is mapped to infinity.

In this section, we first estimate the number of non-equivalent dynamical systems over $\F_q$ generated by rational functions with the form \eqref{def:system}. Then, we indicate that these estimates are also valid for such systems defined by \eqref{def:system2}.  

For non-negative integers $m,n$, define
\begin{align*}
S_{m,n}(q)= \{ f/g: f(X),g(X) \in \F_q[X], & \deg f=m, \\
& \deg g=n, \textrm{$g$ is monic}\}.
\end{align*}
Note that $S_{m,n}(q)$ is exactly the set consisting of the rational functions of the forms $f/g$, where $f,g\in \F_q[X]$ with $\deg f=m$ and $\deg g=n$. In particular, the set $S_{m,0}(q)$ exactly consists of polynomials of degree $m$, and it has been studied in \cite{KLMMSS}. 

Now, let $N_{m,n}(q)$ be the number of non-equivalent dynamical systems generated by the set $S_{m,n}(q)$ as \eqref{def:system}. 
Since $\# S_{m,n}(q)= (q-1)q^{m+n}$, we have the following trivial upper bound 
$$
N_{m,n}(q) < q^{m+n+1}.
$$
Here, we give a non-trivial upper bound for $N_{m,n}(q)$. 

\begin{theorem}
\label{thm:Nmn1}
For any non-negative integers $m,n$ with $m+n \ge 1$, define two non-negative integers $t,r$ by the Euclidean division
$$
m=t|m-n-1|+r, \quad 0 \le r < |m-n-1|.
$$
Let $r^*$ be the number of integers $i$, $1\le i \le r$, such that $\gcd(i,|m-n-1|)\ne 1$ if $r\ge 1$; otherwise if $r=0$,  let $r^*=0$. 
Then, we have 
\begin{equation*}
\begin{split}
N_{m,n}(q)
&\le \left\{\begin{array}{ll}
q^{m+n}+(s-1)q^{n+t(|m-n-1|-\varphi(|m-n-1|))+r^*} & 
\text{if $m\ge 1$,}
\\
q+s-1 &
\text{if $m=0,n=1$},
\\
q^{n}+(s-1)q^{n-2} &
\text{if $m=0,n\ge 2$},
\end{array}
\right.
\end{split}
\end{equation*}
where $s=\gcd(q-1,|m-n-1|)$, and $\varphi$ is Euler's totient function.
In particular, $N_{m,n}(q)\le q^{m+n}$ if $|m-n-1|=1$, and if $|m-n-1|\ge 2$ we have $N_{m,n}(q)\le 2q^{m+n}$. Furthermore, $N_{m,n}(q)\le q^{m+n}$ if $s=1$.
\end{theorem}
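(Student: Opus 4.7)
The plan is to mimic the Burnside-counting strategy of Theorem~\ref{thm:sparse1}, applied to normalized pairs $(f,g) \in S_{m,n}(q)$. First I would let $\F_q^*$ act on $S_{m,n}(q)$ by the scalings $\psi_\lam \colon X \mapsto \lam X$, sending $(f,g)$ to the monic-denominator-normalized representative $(\tilde f, \tilde g)$ of $\psi_\lam^{-1} \circ (f/g) \circ \psi_\lam$. A direct computation with $f = \sum_{i=0}^m a_i X^i$ and $g = \sum_{i=0}^n b_i X^i$ (where $b_n = 1$) gives $\tilde a_i = \lam^{i-n-1} a_i$ and $\tilde b_i = \lam^{i-n} b_i$, with $\tilde b_n = 1$. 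Since conjugation preserves dynamical equivalence, $N_{m,n}(q)$ is at most the orbit count, and Burnside's lemma yields
\[
N_{m,n}(q) \le \frac{1}{q-1}\sum_{\lam \in \F_q^*} |\mathrm{Fix}(\lam)| = q^{m+n} + \frac{1}{q-1}\sum_{\lam \ne 1} |\mathrm{Fix}(\lam)|,
\]
with the $\lam = 1$ summand producing the leading term $q^{m+n}$.

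For $\lam \ne 1$, the fixation conditions $a_i(\lam^{i-n-1}-1) = 0$ and $b_i(\lam^{i-n}-1) = 0$ combined with $a_m \ne 0$ force $\lam^d = 1$, where $d = |m-n-1|$; hence at most $s - 1$ non-trivial $\lam$ contribute, each of some order $e \mid d$ with $e \ge 2$. For any such $\lam$, every nonzero $a_i$ with $0 \le i \le m - 1$ requires $e \mid (i-n-1)$ and hence $\gcd(i-n-1, d) > 1$; likewise every nonzero $b_i$ requires $\gcd(n-i, d) > 1$. Setting
\[
A = |\{i \in [0, m-1] : \gcd(i-n-1,d) > 1\}|, \quad B = |\{i \in [0, n-1] : \gcd(n-i, d) > 1\}|,
\]
this gives $|\mathrm{Fix}(\lam)| \le (q-1) q^{A+B}$ uniformly over the non-trivial $\lam$ with $\lam^d = 1$, and hence $N_{m,n}(q) \le q^{m+n} + (s-1) q^{A+B}$.

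The remaining step is the combinatorial inequality $A + B \le n + t(d-\varphi(d)) + r^*$ when $m \ge 1$. After substituting $j = i - n - 1$ in $A$ (placing $j$ in $[-n-1, d-1]$) and $k = n - i$ in $B$ (placing $k$ in $[1, n]$), the central window $[0, d-1]$ of $A$ contributes exactly $d - \varphi(d)$ (counting $j = 0$, since $\gcd(0, d) = d$), while the reflected negative tail $[1, n+1]$ of $A$ together with the range $[1, n]$ of $B$ splits into $t - 1$ full periods of length $d$—each contributing $d - \varphi(d)$ to each of the two sums—plus a residual window of length at most $r$, whose contribution is exactly $r^*$. Using $n = (t-1)d + r - 1$ and the trivial bound $|\{k \in [1, r-1] : \gcd(k, d) > 1\}| \le r - 1$, the resulting inequality follows. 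The boundary case $r = 0$ must be treated separately, since then $n+1$ aligns with a period boundary and the residual window vanishes. For the edge cases $m = 0$, the constraint $a_0 \ne 0$ instead forces $\lam^{n+1} = 1$, and the analysis reduces to bounding $\lfloor n/e \rfloor$ for $e \ge 2$ dividing $n+1$: this is $0$ when $n = 1$ (yielding $q + s - 1$) and at most $n - 2$ when $n \ge 2$ (yielding $q^n + (s-1) q^{n-2}$).

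The main obstacle will be the final combinatorial bookkeeping: aligning both index ranges with arithmetic progressions modulo $d$, partitioning into complete and partial periods without double-counting, and verifying that the residual contribution is correctly packaged as $r^*$, especially at the boundary $r = 0$ where the partial period disappears.
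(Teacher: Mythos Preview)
Your overall strategy is the same as the paper's: let $\F_q^*$ act on $S_{m,n}(q)$ via $\psi_\lambda$, apply Burnside, and bound the fixed-point count for each $\lambda\ne 1$ after observing that $a_m\ne 0$ forces $\lambda^{|m-n-1|}=1$. Your computation of the transformed coefficients $\tilde a_i=\lambda^{i-n-1}a_i$, $\tilde b_i=\lambda^{i-n}b_i$ is in fact cleaner than what the paper does: the paper instead cross-multiplies to $f(\lambda X)g(X)=\lambda f(X)g(\lambda X)$ and compares coefficients of $X^{n+j}$, obtaining recursions that determine $a_j$ from $a_{j+1},\dots,a_m,b_0,\dots,b_{n-1}$ whenever $\gcd(j-n-1,d)=1$. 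The paper then simply leaves all $n$ of the $b_i$'s free, so the exponent is $n$ plus the number of $j\in[0,m-1]$ with $\gcd(m-j,d)>1$, which is exactly $t(d-\varphi(d))+r^*$ by the Euclidean division $m=td+r$. That is the whole combinatorial step.

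Your ``remaining step'' is therefore more work than necessary. Since your $A$ coincides with the paper's count $t(d-\varphi(d))+r^*$ (via the substitution $k=m-i$, noting $\gcd(i-n-1,d)=\gcd(m-i,d)$), the inequality $A+B\le n+t(d-\varphi(d))+r^*$ is just $B\le n$, which is trivial. The elaborate splitting into a central window, reflected negative tail, and $t-1$ full periods is unneeded, and your identity $n=(t-1)d+r-1$ is only valid when $m>n+1$; for $1\le m\le n$ (still covered by the theorem's case $m\ge 1$) it fails, so that part of your sketch would not go through as written. The $m=0$ cases you handle correctly and again slightly more sharply than the paper, which just kills $b_0$ and $b_{n-1}$ directly to get the exponent $n-2$.
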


\begin{proof}
We use the same idea as in Theorem~\ref{thm:sparse1}. Indeed, 
for $\lambda \in \F_q^*$, 
the bijections
$$
\psi_{\lam}:\  X \mapsto \lambda X
$$
with inverse $\psi_{\lam}^{-1}: \  X \mapsto \lambda^{-1}X$, 
form a group of order $(q-1)$ in the usual way, which acts on the set $S_{m,n}(q)$ as the map
$$
f(X)/g(X) \to \psi_{\lam}^{-1} \circ f/g \circ \psi_{\lam}(X),
$$
where $f/g \in S_{m,n}(q)$. Generally, we write $f,g$ as 
\begin{align}
\label{eq:f/g}
& f(X)=a_mX^m+a_{m-1}X^{m-1}+ \cdots + a_0,  \\
& g(X)=X^n+b_{n-1}X^{n-1}+ \cdots + b_0. \notag
\end{align}

As before, we have 
\begin{equation} 
\label{Burnside1}
N_{m,n}(q)\le \frac{1}{q-1}\sum_{\lam \in \F_q^*} M_{m,n}(\lam),
\end{equation}
where  $M_{m,n}(\lam)$ is the number of rational functions in $S_{m,n}(q)$  fixed by $\psi_{\lam}$ under the above group action.

Trivially, we have
\begin{equation}
\label{eq:lam=1}
M_{m,n}(1)=(q-1)q^{m+n}.
\end{equation}

For any $f/g \in S_{m,n}(q)$ with the form \eqref{eq:f/g} satisfying
 $\psi_{\lam}^{-1} \circ f/g \circ \psi_{\lam}(X)= f(X)/g(X)$,
 we have
\begin{equation}
\label{eq:equality}
f(\lambda X)g(X) = \lam f(X)g(\lambda X).
\end{equation}
Comparing the leading coefficients we
derive 
$$
a_m(\lam^m-\lam^{n+1})=0,
$$
which implies that 
$\lambda^{m-n-1} = 1$. 
So
\begin{equation}
\label{eq: good lam}
M_{m,n}(\lam)=0
\end{equation}
for any $\lam$ with $\lambda^{m-n-1} \ne 1$.

Assume now that $\lambda^{m-n-1} = 1$ but $\lambda \ne 1$, which implies that $|m-n-1|>1$. Comparing the coefficients of $X^{n+j}$ in  both sides of the equality \eqref{eq:equality},  
we see that for every $j = 0,1, \ldots, m$ there are polynomials
$$
F_j\in \F_q[Y_{j+1},\ldots,Y_m,Z_0,\ldots,Z_{n-1}, U]
$$
such that
$$
a_j(\lam^j - \lam^{n+1}) = F_j(a_{j+1},\ldots, a_m,b_0,\ldots,b_{n-1}, \lam),
$$
where in particular $F_m=0$.
Since $\lam \ne 1$ and $\lam^{m-n-1} = 1$, it follows
that for every $j$, $j=0,1,\ldots, m$, with $\gcd(j-n-1,m-n-1)=1$ we have $\lam^{j} \ne \lam^{n+1}$
and thus $a_j$ is uniquely defined by $a_{j+1},\ldots, a_m,b_0,\ldots,b_{n-1}, \lam$. Note that 
$$
\gcd(j-n-1,m-n-1)=\gcd(m-j,m-n-1).
$$
So, if $m\ge 1$, it is equivalent to count how many integers $i$ ($i=0,1,\ldots,m$) are not coprime to $m-n-1$; thus, for $|m-n-1|>1$ and any  $\lam$ satisfying $\lam^{m-n-1} = 1$ and $\lam \ne 1$, we have
\begin{equation}
\label{eq:lam not 1}
M_{m,n}(\lam)\le (q-1)q^{n+t(|m-n-1|-\varphi(|m-n-1|))+r^*},
\end{equation}
where $m\ge 1$. If $m=0$ (so $n\ge 1$), by \eqref{eq:equality} we obtain 
$$
a_0(X^n+b_{n-1}X^{n-1}+\cdots + b_0) = \lam a_0 \((\lam X)^n+b_{n-1}(\lam X)^{n-1}+\cdots + b_0 \).
$$
As $a_0 \ne 0$, we get 
$$
b_i (\lam^{i+1}-1)=0, \quad i=0,1, \ldots, n-1.
$$
Since in this case $\lam^{n+1}=1$ and $\lam \ne 1$, we must have $b_0=0, b_{n-1}=0$. Thus, if $m=0$, $\lam^{n+1}=1$ and $\lam \ne 1$, we have 
\begin{equation}
\label{eq:m=0}
\begin{split}
M_{m,n}(\lam)
&\le \left\{\begin{array}{ll}
q-1  & \text{if $n=1$,}
\\
(q-1)q^{n-2} &
\text{if $n\ge 2$}.
\end{array}
\right.
\end{split}
\end{equation}
Notice that since  $\lam^{m-n-1} = 1$ and $\lam \ne 1$,  the element $\lam$ can take at most $\gcd(q-1,|m-n-1|)-1$ values.

Using~\eqref{Burnside1} together with~\eqref{eq:lam=1}, \eqref{eq: good lam}, \eqref{eq:lam not 1} and \eqref{eq:m=0}, we complete the proof.
 \end{proof}

Provided $m-n\ge 2$, we can further obtain an improvement. 
Note that if $n=0$, the result has been already given in \cite[Theorem 1]{KLMMSS}.

\begin{theorem}
\label{thm:Nmn2}
For any non-negative integers $m,n$ with $m-n\ge 2$, define two non-negative integers $t,r$ by the Euclidean division
$$
m=t(m-n-1)+r, \quad 0 \le r < m-n-1.
$$
Let $r^*$ be the number of integers $i$, $1\le i \le r$, such that $\gcd(i,m-n-1)\ne 1$ if $r\ge 1$; otherwise if $r=0$,  let $r^*=0$. 
Then, we have
\begin{equation*}
\begin{split}
N_{m,n}(q)
&\le \left\{\begin{array}{ll}
q^{m+n-1}+(s-1)q^{n+t(m-n-1-\varphi(m-n-1))+r^*}\\
\qquad\qquad\qquad\qquad\qquad\qquad\qquad\qquad\quad 
\text{if $p\nmid m-n$,}
\\
q^{m+n-1}+(s-1)q^{n+t(m-n-1-\varphi(m-n-1))+r^*}+(q-1)q^{m/p-1}\\
\qquad\qquad\qquad\qquad\qquad\qquad\qquad\qquad\quad
\text{if $p \mid m-n$},
\end{array}
\right.
\end{split}
\end{equation*}
where $s=\gcd(q-1,m-n-1)$.
In particular, $N_{m,n}(q)\le 2q^{m+n-1}$ if $m-n=2$, and if $m-n\ge 3$ we have $N_{m,n}(q)\le 3q^{m+n-1}$. Furthermore, $N_{m,n}(q)\le q^{m+n-1}$ if $p\nmid m-n$ and $s=1$.
\end{theorem}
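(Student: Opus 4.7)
The plan is to mimic the proof of Theorem~\ref{thm:Nmn1} but, as in the polynomial case (Theorem~\ref{thm:Nd bound U}) and in the proof of Theorem~\ref{thm:lin}, to enlarge the acting group from $\{\psi_\lambda\}$ to the full affine group $G=\{\phi_{\lambda,\mu}:\lambda\in\F_q^*,\mu\in\F_q\}$ of order $(q-1)q$ defined in~\eqref{eq:phi}. After renormalising the denominator to remain monic, $\phi_{\lambda,\mu}$ acts on a pair $(f,g)\in S_{m,n}(q)$ by
$$
g(X)\ \longmapsto\ \lambda^{-n}g(\lambda X+\mu),\qquad f(X)\ \longmapsto\ \lambda^{-n-1}\bigl(f(\lambda X+\mu)-\mu g(\lambda X+\mu)\bigr).
$$
Applying Burnside's counting formula then yields
$$
N_{m,n}(q)\ \le\ \frac{1}{(q-1)q}\sum_{\lambda\in\F_q^*,\,\mu\in\F_q}M_{m,n}(\lambda,\mu),
$$
where $M_{m,n}(\lambda,\mu)$ denotes the number of pairs in $S_{m,n}(q)$ fixed by $\phi_{\lambda,\mu}$, and I would split the sum into the three cases $(\lambda,\mu)=(1,0)$, $\lambda\neq 1$, and $\lambda=1$ with $\mu\neq 0$, corresponding to the three summands of the stated bound.

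The identity contributes $M_{m,n}(1,0)=(q-1)q^{m+n}$, producing the leading $q^{m+n-1}$. For $\lambda\neq 1$, the key observation is that $\phi_{\lambda,\mu}$ is conjugate inside $G$ to $\phi_{\lambda,0}$ by the translation $T_c(X)=X+c$ with $c=\mu/(1-\lambda)$; since $T_c$ preserves $S_{m,n}(q)$ (it keeps $g$ monic of the same degree and leaves the leading coefficient of $f$ unchanged), conjugation gives $M_{m,n}(\lambda,\mu)=M_{m,n}(\lambda,0)$ independent of $\mu$. The leading-coefficient comparison carried out in the proof of Theorem~\ref{thm:Nmn1} then forces $\lambda^{m-n-1}=1$ (yielding the $s-1$ non-trivial admissible values of $\lambda$) and bounds
$$
M_{m,n}(\lambda,0)\le(q-1)q^{n+t(m-n-1-\varphi(m-n-1))+r^*};
$$
summing over the $q$ values of $\mu$ and dividing by $(q-1)q$ produces the second term in both branches of the stated bound.

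The remaining case $\lambda=1$, $\mu\neq 0$ gives the system $g(X+\mu)=g(X)$ and $f(X+\mu)=f(X)+\mu\,g(X)$. In characteristic $p$, the first equation forces $g$ to be a polynomial in $\phi_\mu(X)=X^p-\mu^{p-1}X$, whence $p\mid n$, and, given such a $g$, the general solution of the second equation is $f=Xg(X)+W(\phi_\mu(X))$; for $\deg f=m\ge n+2$ one needs $p\mid m$ and $\deg W=m/p$ with nonzero leading coefficient. Both divisibilities together imply $p\mid m-n$, which is exactly when the third summand is allowed to contribute; when $p\nmid m-n$ the fixed set is empty and that summand is zero. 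Counting the admissible $W$'s and summing over $\mu\neq 0$ gives the third term $(q-1)q^{m/p-1}$ after dividing by $(q-1)q$. The main obstacle I expect is in this final step: getting the exponent to come out as $m/p-1$ rather than the slightly larger value produced by a naive count of the $q^{n/p}$ choices for the periodic part of $g$. Addressing this will likely require either a sharper argument (for instance, counting distinct rational functions rather than pairs, exploiting the redundancy whereby many $(f,g)$ define the same reduced fraction) or a finer tracking of $T_c$-style equivalences within the $\lambda=1$ case, analogous to the refinement used in the proof of Theorem~\ref{thm:Nd bound U}.
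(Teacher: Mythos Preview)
Your strategy—act by the full affine group $G=\{\phi_{\lambda,\mu}\}$, apply Burnside, and split into the cases $(\lambda,\mu)=(1,0)$, $\lambda\ne 1$, and $\lambda=1$ with $\mu\ne 0$—is exactly the paper's. Your conjugation-by-$T_c$ reduction of the $\lambda\ne 1$ case to $\mu=0$ is a tidy variant; the paper instead compares coefficients of $X^{n+j}$ directly in $f(\lambda X+\mu)g(X)=\lambda f(X)g(\lambda X+\mu)+\mu g(\lambda X+\mu)g(X)$ for arbitrary $\mu$, obtaining the same bound $M_{m,n}(\lambda,\mu)\le(q-1)q^{\,n+t(m-n-1-\varphi(m-n-1))+r^*}$.

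The obstacle you flag in the $\lambda=1$, $\mu\ne 0$ case is genuine, and the paper does not circumvent it. From the fixed-point identity the paper extracts the triangular relations $(j-n)a_j\mu=F_j(a_{j+1},\ldots,a_m,b_0,\ldots,b_{n-1},\mu)$ for $j=0,\ldots,m$ and then asserts $M_{m,n}(1,\mu)\le(q-1)q^{m/p}$; but that count omits the $q^n$ choices for $b_0,\ldots,b_{n-1}$, and the argument as written only yields $(q-1)q^{\,n+\lfloor m/p\rfloor}$. Your parametrisation $g=h(\phi_\mu)$, $f=Xg+W(\phi_\mu)$ is in fact sharper and gives the \emph{exact} value $M_{m,n}(1,\mu)=(q-1)q^{(m+n)/p}$ when $p\mid m$ and $p\mid n$ (and zero otherwise); for $n\ge p$ this strictly exceeds $(q-1)q^{m/p}$. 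Thus the Burnside argument cannot produce the third summand $(q-1)q^{m/p-1}$ of the stated bound; what it actually delivers is $(q-1)q^{(m+n)/p-1}$, present only when $p\mid\gcd(m,n)$. The coarser ``in particular'' conclusions $N_{m,n}(q)\le 2q^{m+n-1}$ and $\le 3q^{m+n-1}$ are unaffected, since $(m+n)/p\le m+n-1$.
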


\begin{proof}
For $\lambda \in \F_q^*$ and $\mu \in \F_q$,  as in the proof of Theorem~\ref{thm:lin}, we define the bijection 
$\phi_{\lambda,\mu}$ and its inverse $\phi_{\lambda,\mu}^{-1}$. Each bijection $\phi_{\lambda,\mu}$  acts on the set $S_{m,n}(q)$ as the map
$$
f(X)/g(X) \to \phi_{\lambda,\mu}^{-1} \circ f/g \circ \phi_{\lambda,\mu}(X),
$$
where $f/g \in S_{m,n}(q)$. 

 As before, we have
\begin{equation} 
\label{Burnside}
N_{m,n}(q)\le \frac{1}{(q-1)q}\sum_{(\lambda,\mu)} M_{m,n}(\lambda,\mu),
\end{equation}
where the sum runs through all the pairs $(\lambda,\mu)\in \F_q^*\times \F_q$, and $M_{m,n}(\lambda,\mu)$ is the number of rational functions in $S_{m,n}(q)$  fixed by $\phi_{\lambda,\mu}$ under the above group action.

Trivially, we have
\begin{equation}
\label{eq:mu=0}
M_{m,n}(1,0)=(q-1)q^{m+n}.
\end{equation}

In the following, we want to estimate $M_{m,n}(\lambda,\mu)$ by fixing a pair $(\lambda, \mu)\in \F_q^*\times \F_q\setminus\{(1,0)\}$.

For any $f/g \in S_{m,n}(q)$ with the form \eqref{eq:f/g} satisfying
 $\phi_{\lambda,\mu}^{-1} \circ f/g \circ \phi_{\lambda,\mu}(X)= f(X)/g(X)$,
 we have
\begin{equation}
\label{eq:fix form}
f(\lambda X + \mu)g(X) = \lambda f(X)g(\lambda X + \mu) + \mu g(\lambda X + \mu)g(X).
\end{equation}
Comparing the leading coefficients we
derive 
$$
a_m(\lam^m-\lam^{n+1})=0,
$$
which implies that 
$$
\lambda^{m-n-1} = 1.
$$
So
\begin{equation}
\label{eq: good lambda}
M_{m,n}(\lambda,\mu)=0
\end{equation}
for any $(\lambda,\mu)$ not satisfying $\lambda^{m-n-1} = 1$.

First, suppose that $\lambda=1$. Note that $\mu \ne 0$.
Comparing the coefficients of $X^{m+n-1}$ in  both sides of the equality \eqref{eq:fix form}, 
we obtain 
\begin{equation}
\label{eq:m+n-1}
(m-n)a_m \mu = 0.
\end{equation}
Thus, $p\mid (m-n)$, here $p$ is the characteristic of $\F_q$.
Moreover, comparing the coefficients of $X^{n+j-1}$ in  both sides of the equality \eqref{eq:fix form}  for every $j=0,1, \ldots, m$ (in fact, they are sums of several terms, we only need to consider those terms where $a_j$ or $a_{j-1}$ appear (if $j=0$, only $a_0$), and we don't need to consider the coefficients in $\mu g(\lambda X + \mu)g(X)$),
we also obtain
relations of the form
$$
(j-n)a_{j}\mu= F_j(a_{j+1},\ldots, a_m,b_0,\ldots,b_{n-1}, \mu), 
\quad j=0,1,\ldots, m,
$$
for some polynomials
$$
F_j\in \F_q[Y_{j+1},\ldots,Y_m,Z_0,\ldots,Z_{n-1}, U],
$$
where in the case $j=m$ we have $F_m = 0$, which corresponds to~\eqref{eq:m+n-1}.
In particular, for every $j=0,1, \ldots, m$ with $\gcd(j-n,p)=1$,
we see that $a_j$ is uniquely defined by $a_{j+1}, \ldots, a_m,b_0,\ldots,b_{n-1} , \mu$. Notice that when $p \mid (m-n)$, we have 
$$
\gcd(j-n,p)=\gcd(m-n-(j-n),p)=\gcd(m-j,p).
$$
Hence, for $\mu \ne 0$ we get that
\begin{equation}
\begin{split}
\label{eq:lambda =1}
M_{m,n}(1,\mu)
&\le \left\{\begin{array}{ll}
0,& \text{if $p\nmid (m-n)$,}\\
(q-1)q^{m/p},&\text{if $p \mid (m-n)$}.
\end{array}
\right.
\end{split}
\end{equation}

Assume now that $\lambda^{m-n-1} = 1$ but $\lambda \ne 1$, which implies that $m-n\ge 3$. As the above, comparing the coefficients of $X^{n+j}$ in  both sides of the equality \eqref{eq:fix form}  for every $j=0,1, \ldots, m$, 
we see that there are polynomials
$$
G_j\in \F_q[Y_{j+1},\ldots,Y_m,Z_0,\ldots,Z_{n-1}, U,V]
$$
such that
$$
a_j(\lam^j - \lam^{n+1}) = G_j(a_{j+1},\ldots, a_m,b_0,\ldots,b_{n-1}, \lam,\mu),
$$
where in particular $G_m=0$.
Since $\lam \ne 0,1$, and $\lam^{m-n-1} = 1$, it follows
that for every $j$, $j=0,1,\ldots, m$, with $\gcd(j-n-1,m-n-1)=1$ we have $\lam^{j} \ne \lam^{n+1}$
and thus $a_j$ is uniquely defined by $a_{j+1},\ldots, a_m,b_0,\ldots,b_{n-1}, \lam,\mu$. So as before, it is equivalent to count how many integers $i$ ($i=0,1,\ldots,m$) are not coprime to $m-n-1$. Thus, for $m-n\ge 3$ and any pair $(\lam,\mu)$ satisfying $\lam^{m-n-1} = 1$ and $\lam \ne 1$, we have
\begin{equation}
\label{eq:lambda not 1}
M_d(\lambda,\mu)\le (q-1)q^{n+t(m-n-1-\varphi(m-n-1))+r^*}.
\end{equation}
Notice that since  $\lam^{m-n-1} = 1$ and $\lam \ne 1$,  the element $\lam$ can take at most $\gcd(q-1,m-n-1)-1$ values.

Using~\eqref{Burnside} together with~\eqref{eq:mu=0}, \eqref{eq: good lambda}, \eqref{eq:lambda =1}
and~\eqref{eq:lambda not 1}, we complete the proof.
 \end{proof}
 
\begin{remark}
In the proofs of Theorems \ref{thm:Nmn1} and \ref{thm:Nmn2}, we actually classify rational functions under the action of affine automorphisms. 
So, the results are also true for such dynamical systems generated by  corresponding rational functions as \eqref{def:system2}.  
\end{remark}

 \section{Comments and questions}

In this paper, we only study the total number of dynamical systems up to equivalence. In practice, some kinds of dynamical systems with prescribed properties are preferable depending on applications. So, it is meaningful and also interesting to prove the existence and estimate the amount of some special kinds of dynamical systems. For example, when using a polynomial $f(X)$ over $\F_p$ to produce pseudorandom numbers, we prefer that $\cG_f$ has a cycle of large length. Here, we mention some 
questions of Shparlinski (personal correspondence).

\begin{question}
Tests show that for any prime $p$ there is a polynomial $f(X) = X^2 +a \in \F_p[X]$ such that $\cG_f$ has only one component. Can we prove this? 
What about polynomials of higher degree? Moreover, how many distinct graphs $\cG_f$ having only one component are there for every prime $p$?
\end{question}

Carlitz~\cite{Carl} (see also~\cite{Zieve}) has proved the following fundamental result. 
For $q>2$, all permutation polynomials over $\F_q$ can be generated by 
the following two classes of permutation polynomials,
\begin{equation*}
  aX+b,\quad a,b\in\F_q,\ a\ne0\text{ and } X^{q-2}.
\end{equation*}
Thus, 
every permutation polynomial of $\F_q$ can be
represented by 
\begin{equation}
\label{eq:Pk}
  P_{k}(X)=\(\ldots
  \((a_0X+a_1)^{q-2}+a_2\)^{q-2}+\ldots+a_{k}\)^{q-2}+a_{k+1}, 
\end{equation}
with some integer $k$, 
where $a_1,a_{k+1}\in\F_q$, $a_i\in\F_q^*$, $i=0,2,\ldots,k$, see~\cite{CMT} for more details.

The authors of~\cite{ACMT} define the {\it Carlitz rank\/} of a 
permutation polynomial $f$ over $\F_q$ to be the smallest positive
integer $k$ satisfying $f=P_{k}$ for a permutation $P_{k}$ of the 
form~\eqref{eq:Pk}, and denote it by $\Crk f$.
In other words, $\Crk f=k$ if $f$ is a composition of at least $k$ inversions
$X^{q-2}$ and $k+1$ (or $k$ if $a_{k+1} = 0$)  linear polynomials. 

As mentioned above, for a permutation polynomial all points are periodic, and thus the functional graph is determined by the cycle structure. 
It would be certainly interesting to give lower or upper bounds for the number of non-isomorphic functional graphs defined by permutation polynomials of Carlitz rank at most $k$ (or exactly $k$).

\section*{Acknowledgements}

The authors want to thank the referee for careful reading and valuable comments.  They would like to thank Igor E. Shparlinski for his useful  suggestions and stimulating discussions.
They are also grateful to the Max Planck
Institute for Mathematics in Bonn, for hosting them during
the program ``Dynamics and Numbers". 

The research of A.~O. was supported by the 
UNSW Vice Chancellor's Fellowship and of M.~S. by the Australian Research Council Grant DP130100237.

\end{document}